\documentclass[12pt]{amsart}

%\documentclass[12pt]{amsart}

%\sloppy
\usepackage{amsmath,amssymb}
\usepackage{graphics}
\usepackage{enumerate}
\usepackage[dvips]{color}
\usepackage{version}
\usepackage{stmaryrd}
\usepackage[all]{xy}

\usepackage{todonotes}
%%%%%%%%%%%%%%%%%%%%%%%%%%%%%%%%%%%%%%%%%%%%%%%%%%%%%%%%%%%%
%%%%%%%%%%%%%%%%%%%%%%%%%%%%%%%%%%%%%%%%%%%%%%%%%%%%%%%%%%%%
%%%%%%%%%%%%%%%%%%%%%%%%%%%%%%%%%%%%%%%%%%%%%%%%%%%%%%%%%%%%

\newtheorem{Thm}{Theorem}[section] %%%%%%%%%%%%%%%%%%%%%
\newtheorem{Lem}[Thm]{Lemma}

\newtheorem{Cor}[Thm]{Corollary}
\newtheorem{Cor.Conj}[Thm]{Corollary of Conjecture}

\newtheorem{Prop}[Thm]{Proposition}

%Meaning ``R''eviewing THeoreMs proven in some papers before. 

\theoremstyle{remark}
\newtheorem{Rem}[Thm]{Remark}
\newtheorem{Ex}[Thm]{Example}

\theoremstyle{definition}
\newtheorem{Def}[Thm]{Definition}

\newtheorem*{ack}{Acknowledgements}

\newcommand{\R}{\ensuremath{\mathbb{R}}}
\newcommand{\C}{\ensuremath{\mathbb{C}}}

\newcommand{\Z}{\ensuremath{\mathbb{Z}}}

\newcommand{\Q}{\mathbb{Q}}

\newcommand{\A}{\mathbb{A}}

\newcommand{\X}{\mathcal{X}}
\newcommand{\Y}{\mathcal{Y}}

\newcommand{\PP}{\mathbb{P}}

%Itemized items 数え方

%欧文フォント Alphabet font. 
 % Times (Termes i.e. improved Times) font. 
%\renewcommand{\rmdefault}{pnc} % New Century Schoolbook font. 

%和文フォント

\begin{document}

\title[
(weak)K-moduli for K-trivial case]{On log minimality of 
weak K-moduli compactifications 
of Calabi-Yau varieties}

\author{Yuji Odaka}
\date{\today}

\maketitle

\begin{abstract}
For moduli of polarized smooth 
K-trivial a.k.a., Calabi-Yau varieties in a general sense, 
we revisit a classical problem of 
constructing its ``weak K-moduli'' 
compactifications which 
parametrizes K-{\it semi}stable (i.e., semi-log-canonical K-trivial) 
degenerations. 
Although weak K-moduli is not unique in general, 
they always contain a unique partial compactification (K-moduli). 

Our main theorem is the 
{\it log minimality} of their normalizations, 
under some conditions. Partially to confirm that known examples 
satisfy the conditions, 
we also include an appendix on the algebro-geometric 
reconstruction of Kulikov models via the MMP, 
which has been folklore at least but we somewhat strengthen. 
\end{abstract}

\section{Introduction}

In this paper, we focus on 
K-trivial variety (often called Calabi-Yau variety 
in e.g. K\"ahler geometric or birational geometric context) which 
means 
projective variety whose singularity is mild so that the 
canonical divisor makes sense and  
is linearly equivalent to $0$. 
The problem of compactifying moduli, 
say $M^{o}\subset \overline{M}$, of 
K-trivial varieties is classical and rich topic with many 
important connections with other fields. The renowned existence theorem of 
Ricci-flat K\"ahler metrics \cite{Yau} and important 
roles they play in the context of string theory 
give one such aspect. 
Rather than expanding such broad but widely wellknown backgrounds somewhat, 
we refer to our previous review \cite[\S 1.2]{galaxy} or the references cited below. 
Also, some connection with K\"ahler geometry via a compactification of moduli is explored in 
\cite{OO} but note that the compactification therein is not even a variety.  Nevertheless, we expect it to parametrize the collapsing of 
Ricci-flat metrics. 

In a purely algebro-geometric side, \cite[\S 1.2]{galaxy}, 
we introduced the terminology of 
{\it weak K-moduli} compactification for 
K-trivial varieties (see Definition \ref{weak.Kmoduli}), 
which roughly means 
compactification $\overline{M}$ 
of moduli $M^{o}$ of polarized smooth Calabi-Yau varieties 
whose boundary 
parametrizes semi-log-canonical K-trivial degenerations. 
The idea had been implicitly existed in the field, as indeed the problem of constructing such compactifiation has been 
well-pursued by many experts and hence classical topic. 
Therefore this may be regarded as a matter of words. The 
terminology ``{\it K-moduli}'' comes from the notion of 
{\it K-stability} (\cite{Tia, Don}) 
which is the algebro-geometric counterpart of existence of 
constant scalar curvature K\"ahler metrics, and is 
introduced in \cite{Od.oldsurvey} with its existence speculation, 
after various 
background works of both algebraic geometry and differential geometry. However 
we need reformulation and more precise version 
of its existence conjecture 
in general. For anticanonically polarized 
$\Q$-Fano varieties case, 
the precise 
reformulation is introduced in 
\cite[3.13, 6.2]{OSS} after \cite[1.3.1]{Spo} 
and the existence 
conjecture is now solved as a 
combination of (at least) a dozen of papers. 

In turn, for the K-trivial/Calabi-Yau case, 
K-semistability (resp., K-stability) 
is simply equivalent to semi-log-canonicity (resp., log terminality) 
\cite{Od0, Od}. 
A very important caution here is that, even for a fixed 
moduli $M^{o}$, weak K-moduli compactifications are 
{\it not} unique (e.g., \cite{Shah, AET}). 
Recall that sometimes (normalization of) weak K-moduli is shown or 
expected to be (one of) the so-called toroidal 
compactifications \cite{AMRT} (cf., \cite[\S3, \S4]{Mum77}), 
and note that they are log minimal with appropriate 
natural boundary divisor. Some other examples 
of weak K-moduli compactification 
such as \cite{AET} is normalized to be 
semi-toric compactification 
by Looijenga \cite{Looi, Looi2} which are similarly log minimal. 

One aim of this paper is to show such 
{\it log minimality} of weak K-moduli 
under certain condition as a general theorem. 

\begin{Thm}[Log minimality - see Theorem \ref{log.min} for details]
Under certain assumptions, 
the normalization of weak K-moduli compactification of 
moduli spaces of polarized K-trivial varieties, is 
log minimal. 
\end{Thm}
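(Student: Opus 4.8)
\emph{Overall strategy.} Let $\overline{M}^{\nu}$ be the normalization of a given weak K-moduli compactification of $M^{o}$ and let $\Delta=(\overline{M}^{\nu}\setminus M^{o})_{\mathrm{red}}$ be its boundary; ``log minimal'' means that $(\overline{M}^{\nu},\Delta)$ is log canonical and $K_{\overline{M}^{\nu}}+\Delta$ is nef. The plan is to reduce this to two independent facts joined by a canonical bundle formula. Write $\lambda$ for the extension over $\overline{M}^{\nu}$ of the Hodge line bundle $\det f_{*}\omega_{\mathcal{X}/M^{o}}$ of the universal family $f\colon\mathcal{X}\to M^{o}$ of polarized smooth K-trivial varieties — up to a positive multiple, the CM line bundle in the K-trivial normalization. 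The target is an identity of $\Q$-divisor classes
\[ K_{\overline{M}^{\nu}}+\Delta\;\sim_{\Q}\;a\,\lambda\;+\;(\text{ramification and auxiliary Hodge terms}),\qquad a>0, \]
from which log minimality follows once one knows that all Hodge-theoretic terms on the right are nef and that $(\overline{M}^{\nu},\Delta)$ is log canonical. Over $M^{o}$ the identity is the classical computation that, contracting with the holomorphic volume form and using infinitesimal Torelli, $\Omega^{1}_{M^{o}}$ is $\lambda$ twisted by a Hodge bundle, so $K_{M^{o}}=\det\Omega^{1}_{M^{o}}$ is the stated combination; under the hypotheses of Theorem~\ref{log.min} — which in the classical examples amount to the period map being generically an open embedding into a quotient of a Hermitian symmetric domain — this collapses to $K_{M^{o}}\sim_{\Q}(\dim M^{o})\,\lambda$ with $\lambda$ the automorphic line bundle.

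\emph{Extending the formula across $\Delta$.} Using the equivalence ``K-semistable $\Leftrightarrow$ semi-log-canonical K-trivial'' of \cite{Od0,Od}, after passing to the associated Deligne--Mumford stack or a finite cover, $\overline{M}^{\nu}$ carries a family of slc K-trivial polarized pairs, and I would extend the interior identity over $\Delta$ by feeding this family into the Ambro--Fujino--Koll\'ar canonical bundle formula together with the nilpotent orbit theorem, so that the discriminant $\Q$-divisor is exactly $\Delta$ and the moduli part is a twist of $\lambda$. The point where the hypotheses enter is that along a general point of each boundary component the degeneration must admit a \emph{Kulikov-type model} — a semistable, relatively log-canonical, $K$-trivial model in the sense of the appendix — so that the central fibre is reduced with coefficient-one components, the discriminant coefficients are all $1$, and no extra positive correction appears; this is exactly why the appendix on MMP-reconstruction of such models is included, as it both verifies the hypothesis in the known cases and keeps the formula clean.

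\emph{Nefness of the Hodge terms.} On the proper space $\overline{M}^{\nu}$, nefness of $\lambda$ and of any auxiliary Hodge-bundle determinant in the formula is the semipositivity of Hodge bundles (Fujita--Kawamata--Zucker, together with Fujino's analysis of the behaviour of Hodge modules at the boundary), or, in K-theoretic language, nefness of the CM line bundle on a proper K-moduli space in the circle of ideas of \cite{OSS}. In the classical locally symmetric cases $\lambda$ is in fact semi-ample, being pulled back from the ample generator of a Baily--Borel-type compactification; combined with the previous step, $K_{\overline{M}^{\nu}}+\Delta$ is then nef — indeed nef and big, hence semi-ample, its ample model being the relevant minimal (Baily--Borel-type) compactification.

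\emph{The main obstacle: log canonicity of $(\overline{M}^{\nu},\Delta)$.} What remains, and is in my view the hardest step, is to control the singularities of weak K-moduli transversally to the boundary. The plan is to show that \'etale-locally near a boundary point $\overline{M}^{\nu}$ is toroidal — a finite quotient of a toric chart — with $\Delta$ the toric boundary, which forces the pair to be log canonical. This local toroidal description should be read off from the deformation theory of the slc K-trivial degeneration sitting over that point: after the Kulikov-type semistable reduction of the appendix, its versal deformation is governed by a limiting mixed Hodge structure whose monodromy weight filtration produces the rational polyhedral cone cutting out the toroidal chart, which is the abstract mechanism behind the appearance of toroidal \cite{AMRT} and Looijenga semi-toric \cite{Looi,Looi2} compactifications as normalizations of weak K-moduli in \cite{AET} and related work. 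Making this precise in the stated generality — in particular handling non-maximally-unipotent monodromy and higher-codimension boundary strata, and observing that the non-uniqueness of weak K-moduli changes only how the boundary is re-parametrized and not the germ of the toroidal structure — is the technical heart of the argument, and is where the assumptions of Theorem~\ref{log.min} do the decisive work.
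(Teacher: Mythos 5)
Your proposal takes a genuinely different route from the paper, but as written it has gaps that I do not think can be closed in the stated generality. First, the pivot of your argument --- the identity $K_{M^{o}}\sim_{\Q}a\lambda+(\text{nef Hodge terms})$ --- is not available for general polarized K-trivial fibres. Contracting with the relative volume form gives $\Omega^{1}_{M^{o}}\simeq (R^{1}f_{*}\Omega^{n-1}_{\mathcal{X}/M^{o}})^{\vee}\otimes\lambda$, so $K_{M^{o}}$ involves the determinant of an \emph{intermediate} Hodge piece, whose curvature has no definite sign by Griffiths' computation; only the extreme pieces $f_{*}\omega$ and $R^{n}f_{*}\mathcal{O}$ are semi-definite. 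Your collapse to $K_{M^{o}}\sim_{\Q}(\dim M^{o})\lambda$ requires the period domain to be Hermitian symmetric and the period map to be an open immersion, which is precisely the Torelli/Shimura input that Theorem~\ref{log.min} is designed to avoid (the log smooth, local-Torelli case is the separate result of \cite{GGR1} cited in the paper). Second, the Ambro--Fujino canonical bundle formula points the wrong way for your purposes: for the slc-trivial fibration $\bar{\pi}$ it expresses $K_{\bar{\mathcal{U}}}$ as $\bar{\pi}^{*}(K_{\overline{\mathcal{M}}}+B+M)$ and identifies $\lambda$ with $B+M$ (so $\lambda$ is pseudo-effective), but it does not propagate the cotangent-bundle identity for $K_{M^{o}}$ across $\Delta$; that extension is a statement about Deligne canonical extensions of Hodge bundles versus $\Omega^{1}_{\overline{M}^{\nu}}(\log\Delta)$ and is not supplied by the formula. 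Third, you defer log canonicity of $(\overline{M}^{\nu},\Delta)$ to an unproven local toroidal structure and correctly flag it as the hardest step; in the paper this is not proved at all --- it is one of the standing assumptions of \S\ref{Notation}, verified in the known examples.

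For contrast, the paper's actual mechanism is hyperbolicity rather than a divisor-class identity: since $(S,\Delta)$ is assumed log canonical, the refined cone theorems of \cite{Sva, Fjn.hyp} reduce nefness of $K_{S}+\Delta$ to Mori hyperbolicity, i.e.\ to the non-existence of non-constant morphisms $\A^{1}\to W$ for each open stratum $W$ of a log canonical centre. This is then established by Theorem~\ref{nosncfamily}: any family over $\A^{1}$ of snc K-trivial fibres of the allowed type is isotrivial, proved by combining \cite{Deng} for the normalized strata with an analysis of the admissible variation of \emph{mixed} Hodge structure on $R^{2}f_{*}f^{-1}\mathcal{O}_{\A^{1}}$ and the Fujino--Fujisawa seminegativity \cite{FF} (generalizing \cite{VZ2} to non-normal fibres). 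If you want to salvage your approach, you would need to either restrict to the locally symmetric setting or find a substitute for the positivity of the intermediate Hodge determinant; the Mori-hyperbolicity route sidesteps both problems.
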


The point is that this log minimality gives a fairly strong restriction on the possible compactifications, 
which we plan to explore in the next work. 
For the proof, we use variation of 
mixed Hodge structures or smooth mixed Hodge modules, 
as well as recent refinement of cone theorems \cite{Sva, Fjn.hyp}. 
The result itself also matches to concrete examples of 
\cite{Namikawa, Shah, Looi2, AN, Nakamura, Alexeev, Zhu, AET, AE} 
(see also \cite{AB, ABE, HLL}) 
among others, 
but a very different point is that our proof in this notes do {\it not} 
rely on any Torelli type theorem nor the structure of Shimura varieties 
but rather of more general Hodge theoretic flavor. 
\vspace{3mm}

As an auxiliary result, we also write a proof that, 
although weak K-moduli $\overline{M}$ 
is not unique for fixed $M^{o}$, it must 
contain a particular canonical (unique) partial compactification $M$, 
which coincides with the Weil-Petersson metric 
completion of $M^{o}$, 
and is quasi-projective. 
Actually this follows from a simple combination of 
known results \cite{Vie, Wan97, DS, Tos.WP, YZha} and 
\cite{Birkar20} 
and is essentially their corollary. 
Last missing piece for a while had been 
a boundedness result (although doubted as \cite[1.2]{YZha}) 
which is now a 
big result of Birkar \cite[1.6]{Birkar20}. 
We acknowledge appreciation to Chenyang Xu for pointing it to me. 
Further, the set of parametrized objects in $M$ are 
characterised by (strict) K-stability or the minimization of the 
degree of CM line bundles as we recall in 
Lemma \ref{CM.min}, Corollary \ref{CM.min2} (also see the references). 

For the case of polarized 
K3 surfaces, this partial compactification 
is explicit and coincides with an orthogonal modular variety. 
This follows from the semi-classical fact that 
allowing polarized ADE singular K3 surfaces (or 
equivalently, quasi-polariezd smooth K3 surfaces) 
fills Heegner divisors. 
The same statements also generalizes to the case of 
irreducible symplectic varieties, 
once we replace ADE singularities by 
symplectic singularities in the sense of Beauville, 
as we showed in \cite[\S 8.3, Theorem 8.3, 
Corollary 8.4]{OO}. See {\it loc.cit} for details. 

In this paper, after clarifying the 
existence of K-moduli and basic 
properties of weak K-moduli in \S \ref{Kmod.sec}, 
in \S \ref{wKmod.sec} we discuss 
weak K-moduli and its log minimality. 
The proof depends on analysis of variation of mixed Hodge 
structures in \S 4. Finally, to fill the lack of reference, we 
discuss an algebraic reconstruction of Kulikov models for 
K-trivial surfaces which is indeed substantially related to the 
weak K-moduli problem for K-trivial surfaces. 

Except for appendix, 
we work over an arbitray algebraically closed field $k$ 
of characteristics $0$ but \S 3, \S 4 requires $k=\C$ since 
we use Hodge theory. 

\section{Preparation - Setup and K-moduli}\label{Kmod.sec}

We make a general setup 
by fixing a connected Deligne-Mumford 
moduli stack $\mathcal{M}^{o}$ 
of polarized smooth K-trivial varieties 
i.e., there is a universal family 
$\pi^{o}\colon (\mathcal{U}^{o},\mathcal{L}^{o})\to 
\mathcal{M}^{o}$. We define weak K-moduli compactification 
and also K-moduli (partial compactification) below. 

\begin{Def}[Weak K-moduli cf., \cite{galaxy}]\label{weak.Kmoduli}
We follow the above notation. We call a proper Deligne-Mumford moduli stack $\overline{\mathcal{M}}$ compactifying $\mathcal{M}$, 
together with a $\Q$-Gorenstein family of 
polarized semi-log-canonical, or equivalently, K-semistable\footnote{the equivalence follows  
from \cite{Od, Od0}}) 
Calabi-Yau varieties 
$\bar{\pi}\colon (\bar{\mathcal{U}},\bar{\mathcal{L}})\to \overline{\mathcal{M}}$, a 
{\it weak K-moduli stack} if it satisfies the following two 
conditions: 
\begin{enumerate}
\item $\bar{\pi}$
extends 
$\pi\colon (\mathcal{U}^{o},\mathcal{L}^{o})\to \mathcal{M}^{o}$,
\item \label{effectivity}
underlying family of varieties 
$\bar{\mathcal{U}}\to \overline{\mathcal{M}}$ 
 is effective (i.e., no isomorphic varieties occur as fibers at 
 different $k$-rational points).  
\end{enumerate}
\end{Def}

We note that when you apply the theory of proper moduli of 
``stable pairs'', or equivalently log (i.e., attaching divisors) 
KSBA theory, the second assertion 
\eqref{effectivity} is a priori quite nontrivial if it holds. Indeed, 
the theory a priori only gives a moduli of {\it log pairs} 
(encoding the additional information of $\Q$- or $\R$-divisors) 
so that there may be locus in which the underlying varieties 
do not deform and only divisors deform. 
Nevertheless, if one takes a general section of very ample 
$(\overline{L}\otimes \overline{\pi}^{*}N)^{\otimes a}$ 
for $N\gg 0$ and ample $N$ on $\overline{M}$, 
it immediately follows that in general 
weak K-moduli should be an algebraic substack of some 
``log KSBA''-type moduli stack (if it exists) by 
taking a general element of relatively very ample 
linear system on the universal family. 
See \cite[Section \S 1.2]{galaxy} for more detailed discussions. 

The following is announced in \cite[Remark 9.1]{OO}. 

\begin{Thm}[K-moduli]\label{Kmod}
For fixed moduli algebraic stack $\mathcal{M}^{o}$ of 
polarized smooth K-trivial varieties as above, we have a 
finite type Deligne-Mumford algebraic stack 
$\mathcal{M}$ which includes $\mathcal{M}^{o}$ 
as an open substack together with a 
Gorenstein family of polarized K-trivial varieties with 
only canonical singularities $(\mathcal{U},\mathcal{L})\to 
\mathcal{M}$ such that the following holds. 
\begin{enumerate}
\item different $k$-valued points $p_{1}$ and $p_{2}$ of 
$\mathcal{M}$ have non-isomorphic polarized fiber 
$\pi^{-1}(p_{i}) (i=1,2)$. 
\item $\mathcal{M}$ is maximum with respect to 
the inclusion relation among those which satisfies above 

\item the coarse moduli space $M$ of $\mathcal{M}$ 
is quasi-projective. If $k=\C$, its analytification 
coincides with the completion of $M^{o}$ with respect to its 
generalized Weil-Petersson K\"ahler orbi-metrics. 
\end{enumerate}
We call this $\mathcal{M}$ the K-moduli (partial compactification) 
of polarized K-trivial varieties. 
\end{Thm}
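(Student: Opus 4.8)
The plan is to deduce the statement by combining Birkar's recent boundedness theorem \cite[1.6]{Birkar20} with the classical construction of quasi-projective moduli of polarized varieties of Kodaira dimension zero and with the transcendental description of the Weil--Petersson completion; as the introduction indicates, once boundedness is available the result is essentially a corollary of \cite{Vie, Wan97, DS, Tos.WP, YZha}. \textbf{Step 1: the parameter space.} I would first fix the Hilbert polynomial $h$ of $(\mathcal{U}^{o},\mathcal{L}^{o})$ on the (connected) component $\mathcal{M}^{o}$, replacing $\mathcal{L}$ by a fixed power if needed so that it is very ample on all fibres of all families under consideration. By \cite[1.6]{Birkar20}, the class of polarized K-trivial varieties $(X,L)$ of the given dimension $n$, with at worst canonical (equivalently, since $K_{X}\sim 0$, klt) singularities and $\chi(X,L^{\otimes m})=h(m)$, is bounded; hence there is a locally closed $\Hilb$-subscheme $H$ with an action of $G=\PGL_{N+1}$ whose $k$-points are the embedded such $(X,L)$ and whose orbits are the isomorphism classes, and an open $G$-stable $H^{o}\subset H$ with $[H^{o}/G]\cong\mathcal{M}^{o}$.

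\textbf{Step 2: the stack $\mathcal{M}$.} Set $\mathcal{M}^{\mathrm{can}}:=[H/G]$; this is a finite-type algebraic stack with a tautological Gorenstein family $(\mathcal{U},\mathcal{L})$ of polarized canonical K-trivial varieties (Gorenstein because $K_{X}\sim 0$). It is Deligne--Mumford: for such $(X,L)$ the group $\Aut(X,L)$ is affine, acting faithfully on $H^{0}(X,L^{\otimes m})$, while its identity component sits in $\Aut^{0}(X)$, which is an abelian variety when $K_{X}\equiv 0$; an affine subgroup of an abelian variety is finite, so $\Aut(X,L)$ is finite. Separatedness of $\mathcal{M}^{\mathrm{can}}$ follows from the uniqueness of canonical K-trivial limits of a one-parameter family, a valuative/MMP argument. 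I then let $\mathcal{M}$ be the union of the connected components of $\mathcal{M}^{\mathrm{can}}$ that meet $\mathcal{M}^{o}$: it is open and closed in $\mathcal{M}^{\mathrm{can}}$, contains $\mathcal{M}^{o}$ as an open substack, and the restriction of $(\mathcal{U},\mathcal{L})$ is the asserted family. Property (1) and the effectivity in the sense of Definition~\ref{weak.Kmoduli} are immediate because $\mathcal{M}^{\mathrm{can}}$ is a moduli \emph{of varieties}: its $k$-points are exactly isomorphism classes.

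\textbf{Step 3: maximality and quasi-projectivity.} For maximality, given any finite-type Deligne--Mumford stack $\mathcal{M}'\supseteq\mathcal{M}^{o}$ (open, with every component meeting $\mathcal{M}^{o}$) carrying an effective Gorenstein family of polarized canonical K-trivial varieties extending $(\mathcal{U}^{o},\mathcal{L}^{o})$, that family induces a morphism $f\colon\mathcal{M}'\to\mathcal{M}^{\mathrm{can}}$; effectivity makes $f$ injective on geometric points, and since the deformation theory of a polarized canonical K-trivial variety is the same computed on either side, $f$ is étale, hence an open immersion, and it sends the dense $\mathcal{M}^{o}$ into $\mathcal{M}$, so $\mathcal{M}'\subseteq\mathcal{M}$. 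This is (2). Quasi-projectivity of the coarse space $M$ is Viehweg's theorem \cite{Vie}: a high power of the CM line bundle, assembled from $\bar{\pi}_{*}\mathcal{L}^{\otimes m}$ and the (trivial) relative dualizing sheaf, descends to an ample line bundle on $M$ --- here the boundedness of Step 1 is precisely the input Viehweg's machinery requires.

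\textbf{Step 4: the Weil--Petersson completion, and the main difficulty.} For $k=\C$, the generalized Weil--Petersson orbifold Kähler metric on $M^{o}$ has curvature representing a positive multiple of the above CM class \cite{Wan97, Tos.WP}, and by \cite{Tos.WP, YZha} --- using \cite{DS} to match the Gromov--Hausdorff/analytic limits with the algebraic ones --- its metric completion adds exactly the points corresponding to polarized canonical K-trivial degenerations, so the completion is $M^{\mathrm{an}}$; this gives (3). The historical obstruction was Step 1, boundedness of polarized Calabi--Yau varieties, now supplied by \cite[1.6]{Birkar20}; with that granted, I expect the remaining care to lie in Steps 2--3 --- verifying separatedness and effectivity of $\mathcal{M}^{\mathrm{can}}$ and making the stacky maximality statement precise --- and in the transcendental input of Step 4 identifying the Weil--Petersson metric completion, where I would rely on \cite{Tos.WP, YZha}.
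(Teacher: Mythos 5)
Your overall architecture matches the paper's: reduce everything to boundedness via Birkar, and quote \cite{Vie, YZha} for the moduli-theoretic properties and \cite{DS, Tos.WP, YZha} for the Weil--Petersson statement. (The paper does not rebuild the stack from a Hilbert scheme as in your Steps 2--3; it takes the existence of the a priori only locally-finite-type K-moduli from \cite[Theorem 1.1]{YZha} or \cite[Theorem 8.23]{Vie} and isolates finite-typeness as the one missing ingredient. Your sketches of the DM property, separatedness via uniqueness of canonical K-trivial limits, and maximality are consistent with that, and the separatedness/uniqueness point is exactly Lemma \ref{CM.min}/Corollary \ref{CM.min2}.)

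There is, however, a genuine gap in your Step 1, and it sits precisely at the ``important caution'' the paper flags. You begin by ``replacing $\mathcal{L}$ by a fixed power if needed so that it is very ample on all fibres of all families under consideration,'' and you then define the class to be bounded by requiring $\chi(X,L^{\otimes m})=h(m)$ for a line bundle $L$. But the objects you need to bound are the klt K-trivial limits $\pi^{-1}(s)$ of one-parameter families out of $\mathcal{M}^{o}$, and on such a limit the induced polarization is a priori only an ample $\Q$-Cartier Weil divisor, with no uniform control on its Cartier index; in particular its Hilbert polynomial as a line bundle is not even defined, and no single power of $\mathcal{L}^{o}$ chosen in advance can be guaranteed to stay a (very ample) line bundle on all limits. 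So your setup presupposes exactly the uniform Cartier index that has to be proved. The correct order of operations, as in the paper's proof, is: for each limit $s$ produce an honest ample Weil divisor $N=(\overline{\mathcal{D}|_{C\setminus s}})|_{s}$ of fixed volume on $\pi^{-1}(s)$ by spreading out a divisor $\mathcal{D}\sim(\mathcal{L}^{o})^{\otimes m}$ along a curve; apply \cite[Corollary 1.6]{Birkar20} to the couples $(X=\pi^{-1}(s),\,B=0,\,N)$ --- whose hypotheses do not require $N$ to be Cartier --- to conclude boundedness and hence a uniform $l$ with $lN$ Cartier; and only then invoke effective basepoint freeness \cite{Kol.bpf} and the very ampleness lemma \cite{Fjn.bpf} to get a uniform embedding and the finite-typeness of $\mathcal{M}$. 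Your proof would go through once Step 1 is rewritten in this order.
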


We emphasize that this $\mathcal{M}$ is not proper in general so 
is still a ``partial compactification'' of $\mathcal{M}^{o}$. 
Nevertheless, since the parametrized polarized varieties are 
all K-stable (not only K-semistable) 
by \cite{Od}, and also characterized by K-polystability, 
we would call it the K-moduli of 
K-trivial varieties. \footnote{The definition is different from 
the ``over-ambitious'' version long ago 
\cite{Od.oldsurvey, Od.Fano}, 
although many later works show the version works for 
anticanonically polarized $\Q$-Fano varieties case.} 
One reason of the name is that in the K-trivial case, 
the K-polystability and K-stability are equivalent 
(cf., \cite{Od0, Od}). 

\begin{Ex}
In the case if $\mathcal{M}^{o}$ is a moduli of 
polarized smooth irreducible symplectic manifolds, 
the above Theorem \ref{Kmod} is proved with a more 
refined statement by \cite[\S 8.3 Theorem 8.3, Corollary 8.4]{OO} and \cite[Theorem 4.8]{Sch}, 
which shows further that $M$ is nothing but the 
locally Hermitian symmetric space whose 
Weil-Petersson metric is the Bergman metric. 
Note that this refinement crucially depends on the 
Torelli type theorem due to Verbitsky \cite{Ver, Ver.er}. 
\end{Ex}

\begin{proof}[proof of Theorem \ref{Kmod}]
From \cite[Theorem 1.1]{YZha} or \cite[Theorem 8.23 (and 
\S 8.3, \S 8.6)]{Vie}, 
it only remains to show the 
boundedness of the possibly klt K-trivial projective limit of 
the members of $\mathcal{M}^{o}$. 
The following abstract proof of such boundedness 
is a corollary to a combination of known results 
with recent big input due to Birkar \cite{Birkar20}. 
Here we expand the details of the proof. 

We replace $\mathcal{L}^{o}$ by 
$(\mathcal{L}^{o})^{\otimes m}$ for fixed $m\gg 0$ such that 
it is relatively very ample and some effective relatively 
Cartier and relatively smooth 
divisor $\mathcal{D}$ 
exists so that $\mathcal{D}\sim_{\mathcal{M}^{o}} 
(\mathcal{L}^{o})^{\otimes m}$. 
For the sake of simplicity, we can and do assume 
$m=1$. 

For the remained boundedness problem, we take any 
$k$-valued point $s\in \mathcal{M}(k)$ and consider 
corresponding 
polarized variety $\pi^{-1}(s)$. 
We can take a smooth curve $(C,s)\subset \mathcal{M}$ 
passing through $s$ and $\mathcal{M}^{o}$, 
take $\overline{\mathcal{D}|_{(C\setminus s)}}$ 
and its restriction 
$(\overline{\mathcal{D}|_{(C\setminus s)}})|_{s}$ 
as a {\it Weil} divisor. 

An important caution here is that it is a priori a hard problem 
if one can take $m$ uniformly so that $\mathcal{L}|_{\pi^{-1}(s)}$ 
is line bundle (not only $\Q$-line bundle) for {\it all} s, 
or equivalently 
$(\overline{\mathcal{D}|_{(C\setminus s)}})|_{s}$ is 
Cartier for all $s$. Note that whether the latter holds or not 
does not 
depend on the choice of $\mathcal{D}$ due to the equivalence. 

Note that 
by the same arguments as \cite[(proof of) Lemma 2.4]{OSS}, 
it follows that $N$ is $\Q$-Cartier and ample although not necessarily Cartier. 
Now we apply the big result 
\cite[Corollary 1.6]{Birkar20} to the setup $X=\pi^{-1}(s)$, 
$B=0$, $N:=(\overline{\mathcal{D}|_{(C\setminus s)}})|_{s}$ 
in the notation of {\it loc.cit}. From the boundedness assertion, 
one can in particular assume that there is a uniform 
$l\in \mathbb{Z}_{>0}$ such that $lN$ is Cartier, which 
does not depend on $s$. 

Therefore, for instance, if we apply 
the effective basepoint freeness 
\cite[Theorem 1.1]{Kol.bpf} to 
$(\pi^{-1}(s),l(\overline{\mathcal{D}|_{(C\setminus s)}})|_{s})$ 
and then the very ampleness lemma 
\cite[Lemma 7.1]{Fjn.bpf} together with the uniform 
existence of the Castelnuovo-Mumford regularity, 
we obtain the finite typeness of $\mathcal{M}$. 

Further diffential geometric fact that 
the hyperK\"ahler metrics parametrized 
by $M$ is Gromov-Hausdorff continuous with respect to the 
Gromov-Hausdorff topology 
and the fact that $M$ 
the completion of $M^{o}$ with respect to the Weil-Petersson 
metric 
also follows from \cite{YZha} which crucially depends on 
\cite{DS} (see also \cite{Tos.WP} and 
related algebro-geometric issue \cite[\S4]{Od.Fano}). 
\end{proof}

Next we show that weak K-moduli always contains the above 
K-moduli, 
as our terminology may suggest. 

\begin{Thm}\label{include.thm}
For a fixed $\mathcal{M}^{o}$ and the family on it, 
any weak K-moduli $\overline{\mathcal{M}}$ contains 
the K-moduli $\mathcal{M}$ as an open algebraic substack 
with the compatible family on it. 
\end{Thm}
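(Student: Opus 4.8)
The plan is to identify $\mathcal{M}$ with the open substack of $\overline{\mathcal{M}}$ over which the fibres have at worst canonical singularities, and then simply read off the statement.

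First I would set $\overline{\mathcal{M}}^{\mathrm{can}}\subseteq \overline{\mathcal{M}}$ to be the locus where the geometric fibres of $\bar\pi$ are canonical; since the fibres are K-trivial, so that $K$ is fibrewise $\sim 0$ and Cartier, this coincides with the klt locus, equivalently the K-stable locus, and it is open by semicontinuity of discrepancies in a $\Q$-Gorenstein family. It is of finite type (being open in the proper $\overline{\mathcal{M}}$), it contains $\mathcal{M}^{o}$ as an open substack, it carries the restriction of $(\bar{\mathcal{U}},\bar{\mathcal{L}})$ --- a Gorenstein family of polarized canonical K-trivial varieties (the relative $\Q$-canonical class becomes Cartier over this locus) --- and by condition \eqref{effectivity} its distinct geometric points carry non-isomorphic underlying varieties, in particular non-isomorphic polarized fibres. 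Hence $\overline{\mathcal{M}}^{\mathrm{can}}$ is one of the stacks-with-family appearing in the maximality statement of Theorem~\ref{Kmod}, so I obtain an open immersion $j\colon \overline{\mathcal{M}}^{\mathrm{can}}\hookrightarrow \mathcal{M}$ carrying one family to the other.

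Next I would prove that $j$ is surjective. Fix a geometric point $s$ of $\mathcal{M}$. Since $\mathcal{M}$ is a partial compactification of the connected $\mathcal{M}^{o}$ --- indeed $M^{o}$ is dense in $M$, being its Weil--Petersson completion over $\C$ by Theorem~\ref{Kmod}, and by construction in general --- I can choose a trait $T=\Spec R$ and a morphism $T\to \mathcal{M}$ sending the closed point to $s$ and the generic point into $\mathcal{M}^{o}$. Composing the generic point with $\mathcal{M}^{o}\hookrightarrow \overline{\mathcal{M}}$ and invoking the valuative criterion of properness for the proper Deligne--Mumford stack $\overline{\mathcal{M}}$, after a finite ramified base change $T'\to T$ I get $\tilde f\colon T'\to \overline{\mathcal{M}}$ extending it. Pulling back the two universal families produces two $\Q$-Gorenstein families of polarized slc K-trivial varieties over $T'$ that agree away from the closed point $0'$: the family $\tilde f^{*}(\bar{\mathcal{U}},\bar{\mathcal{L}})$, and the base change to $T'$ of the family on $\mathcal{M}$, the latter having canonical central fibre $(\mathcal{U}_{s},\mathcal{L}_{s})\otimes k(0')$.

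The hard part will be to show that these two families in fact coincide over all of $T'$: this is the separatedness of the moduli of polarized slc K-trivial varieties, i.e.\ the uniqueness of the relatively ample (``stable'') slc model over a trait with prescribed generic fibre. I would deduce it from the standard KSBA/MMP separatedness after rigidifying the polarization by a general member of $|\mathcal{L}^{\otimes a}|$ for $a\gg 0$, exactly along the lines of the discussion following Definition~\ref{weak.Kmoduli}; the delicate points are the passage between the ``variety'' and the ``log pair'' pictures flagged there, and checking that the polarization itself, not only the underlying variety, is transported. Granting this, the fibre of $\overline{\mathcal{M}}$ at $\tilde f(0')$ is $(\mathcal{U}_{s},\mathcal{L}_{s})\otimes k(0')$, hence canonical, so $\tilde f(0')\in \overline{\mathcal{M}}^{\mathrm{can}}$, and $j(\tilde f(0'))$ has the same polarized fibre as $s$, whence $j(\tilde f(0'))=s$ by the effectivity in Theorem~\ref{Kmod}. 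Thus every geometric point of $\mathcal{M}$ lies in the open image of $j$; since $\mathcal{M}$ is Jacobson this forces $j$ to be surjective, hence an isomorphism, and $\mathcal{M}\cong\overline{\mathcal{M}}^{\mathrm{can}}$ is an open algebraic substack of $\overline{\mathcal{M}}$ with the compatible family, as desired. The main obstacle is the separatedness step; the openness of the canonical locus and the density of $\mathcal{M}^{o}$ in $\mathcal{M}$ are routine but should be recorded carefully.
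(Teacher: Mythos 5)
Your argument is essentially correct and reaches the same crux as the paper, but it is organized in the opposite direction. The paper does not single out the canonical locus of $\overline{\mathcal{M}}$; instead it takes the exhaustion $\mathcal{M}_{1}\subset\mathcal{M}_{2}\subset\cdots$ of the (a priori only locally finite type) K-moduli stack coming from \cite{Vie, YZha}, and shows each $\mathcal{M}_{l}$ admits an open immersion into $\overline{\mathcal{M}}$ by contradiction: a point where the immersion fails produces a pointed curve $C\ni p$ with a punctured family from $\mathcal{M}^{o}$ admitting two distinct completions, one with log terminal and one with slc K-trivial central fibre, which is impossible by Corollary \ref{CM.min2}. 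Your route (openness of the klt locus, the maximality clause of Theorem \ref{Kmod} to get $j\colon\overline{\mathcal{M}}^{\mathrm{can}}\hookrightarrow\mathcal{M}$, then density of $\mathcal{M}^{o}$ plus properness of $\overline{\mathcal{M}}$ to prove surjectivity) is a legitimate alternative and, like the paper's Remark after the theorem, recovers finite typeness of $\mathcal{M}$ from the existence of $\overline{\mathcal{M}}$ without Birkar's boundedness. The one place where your sketch is genuinely thinner than the paper is exactly the step you flag as ``the hard part'': the uniqueness of the slc K-trivial completion over a trait once a log terminal completion exists. The paper does not run the KSBA separatedness argument with an auxiliary divisor (which requires checking that the closure of a general member of $|\mathcal{L}^{\otimes a}|$ yields an slc stable pair on \emph{both} completions, and that the polarization, not just the pair, is transported); it instead quotes the CM line bundle degree comparison, Lemma \ref{CM.min}: the log terminal filling strictly minimizes $\deg\lambda_{\rm CM}$ among slc fillings, while any slc filling weakly minimizes it, so two distinct such fillings are impossible (Corollary \ref{CM.min2}). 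If you intend your proof to be self-contained you should either import that lemma or carry out the divisor-rigidification carefully; as written, the separatedness step is asserted rather than proved, whereas everything else in your outline is routine and correct.
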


\begin{Rem}
The following proof also shows that 
existence of weak K-moduli compactification 
implies the existence of finite type K-moduli (Theorem \ref{Kmod}) 
without using Birkar's result \cite[Corollary 1.6]{Birkar20} 
as above proof of Theorem \ref{Kmod}. 
\end{Rem}

\begin{proof}
By \cite[Theorem 1.1]{YZha} or  \cite[Theorem 8.23 (and 
\S 8.3, \S 8.6)]{Vie}, 
there is an increasing (and exhausting) sequence of 
moduli substacks of K-moduli stack (which may be a priori 
locally finite type) 
$\mathcal{M}_{1}\subset 
\mathcal{M}_{2}\subset \cdots$. 

We prove that for any $l\in \Z_{>0}$, 
$\mathcal{M}_{l}$ has open immersion $\iota_{l}$ to 
$\overline{\mathcal{M}}$ preserving the ($\Q$-)polarized family. 
Then by the Noetherian argument, the assertion follows. 
We prove the existence of such $\iota_{l}$ by contradiction. 
Assuming the contrary, there should be 
a pointed curve $C\ni p$ together with a morphism 
$\varphi^{o}\colon (C\setminus \{p\})\to 
\mathcal{M}^{o}$ such that 
it extends to both 
$\varphi_{1}\colon C\to \mathcal{M}_{l}$ 
and also $\varphi_{2}\colon C\to \overline{\mathcal{M}}$ 
but not extends to $C\to \mathcal{M}$. 
If we take the 
polarized flat projective 
family $(\X^{o},\mathcal{L}^{o})\to (C\setminus p)$ 
which corresponds to $\varphi^{o}$ 
and its two extensions to $C$ which corresponds to 
$\varphi_{1}$ and $\varphi_{2}$, then the following 
Corollary \ref{CM.min2} (of 
Lemma \ref{CM.min}) 
leads to the contradiction, 
which we reproduce for the sake of convenience.

\begin{Lem}[{cf., \cite[4.2 (i)]{Od.Fano} 
\footnote{but with an obvious typo that 
$K_{\X/C}$ of left hand side meant to be a given polarization 
$\mathcal{L}$ on $\X$ which generically coincides with 
$\mathcal{L}'$.}, \cite[2.14(1)]{Od.Fal}}]\label{CM.min}

Suppose $\pi\colon \mathcal{X}\rightarrow C\ni p$ is a $\mathbb{Q}$-Gorenstein flat projective family of $n$-dimensional semi-log-canonical K-trivial varieties (resp., 
semi-log-canonical K-trivial varieties such that $\pi^{-1}(p)$ is 
log terminal) 
over a projective curve $C$ and a relatively ample line bundle 
$\mathcal{L}$ on $\X$. Take 
any other non-isomorphic 
flat projective family  $\mathcal{X}'\rightarrow C$, 
together with a relatively ample line bundle 
$\mathcal{L}'$, which is isomorphic to 
$(\mathcal{X},\mathcal{L}) 
\rightarrow C$ away from the fiber $\pi^{-1}(p)$. 
Then, we have 
$$
{\rm deg}(\lambda_{\rm CM}(\mathcal{X},\mathcal{L}))\leq (resp.\ <)
{\rm deg}(\lambda_{\rm CM}(\mathcal{X}',\mathcal{L}')). 
$$
Here, $\lambda_{\rm CM}$ stands for the CM line bundles 
(\cite{FS90, PT, FR}) which values at 
${\rm Pic}(C)$ in above situations. 
\end{Lem}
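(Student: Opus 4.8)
\emph{Proof strategy.} The plan is to compute both CM degrees as intersection numbers on a common birational model over $C$ and to reduce their difference to the intersection of a \emph{discrepancy} divisor with an effective curve. First I would use the intersection‑theoretic formula for the CM degree over a curve coming from the Knudsen--Mumford expansion of $\det R\pi_*\mathcal{L}^{\otimes k}$ (cf.\ \cite{PT, FS90, FR}): up to a positive constant $c_n$ depending only on $n$ and a constant $b_n$, one has $\deg\lambda_{\mathrm{CM}}(\mathcal{X},\mathcal{L})=c_n\bigl((K_{\mathcal{X}/C}\cdot\mathcal{L}^{n})+b_n\bar\mu\,(\mathcal{L}^{n+1})\bigr)$, where $\bar\mu=-(K_F\cdot L^{n-1})/(L^n)$ for a general fibre $(F,L)$. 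Because the fibres are K‑trivial, $K_F\sim_{\Q}0$, so $\bar\mu=0$ and $\deg\lambda_{\mathrm{CM}}(\mathcal{X},\mathcal{L})=c_n(K_{\mathcal{X}/C}\cdot\mathcal{L}^{n})$; the same reduction applies to $(\mathcal{X}',\mathcal{L}')$, and I may assume $\mathcal{X}'$ is normal and $\Q$‑Gorenstein (replacing it by a small $\Q$‑factorialisation, or, in the non‑normal slc case, running the whole argument below on normalisations with the conductor divisors as in the cited references).

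Next I would choose a normal variety $\mathcal{Y}$ with proper birational $C$‑morphisms $f\colon\mathcal{Y}\to\mathcal{X}$ and $f'\colon\mathcal{Y}\to\mathcal{X}'$; since $\mathcal{X}$ and $\mathcal{X}'$ are isomorphic away from $\pi^{-1}(p)$, both $f$ and $f'$ are isomorphisms over $C\setminus\{p\}$ and every $f$- or $f'$-exceptional prime divisor lies over $p$. Put $g=\pi\circ f$ and write $K_{\mathcal{Y}/C}=f^*K_{\mathcal{X}/C}+E=(f')^*K_{\mathcal{X}'/C}+E'$, with $E$ (resp.\ $E'$) supported on the $f$- (resp.\ $f'$-) exceptional locus, so $E,E'$ are supported on $g^{-1}(p)$; moreover $(f')^*\mathcal{L}'=f^*\mathcal{L}+\Gamma$ for an integral divisor $\Gamma$ supported on $g^{-1}(p)$, since the two pullbacks agree over $C\setminus\{p\}$. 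By the projection formula,
\[
\tfrac{1}{c_n}\bigl(\deg\lambda_{\mathrm{CM}}(\mathcal{X}',\mathcal{L}')-\deg\lambda_{\mathrm{CM}}(\mathcal{X},\mathcal{L})\bigr)=\bigl((K_{\mathcal{Y}/C}-E')\cdot((f')^*\mathcal{L}')^{n}\bigr)-\bigl((K_{\mathcal{Y}/C}-E)\cdot(f^*\mathcal{L})^{n}\bigr).
\]
Two vanishings finish the bookkeeping: $(E\cdot(f^*\mathcal{L})^{n})=(E'\cdot((f')^*\mathcal{L}')^{n})=0$, because $(f^*\mathcal{L})^{n}=f^*(\mathcal{L}^{n})$ is represented by a general complete‑intersection curve lying in the smooth locus of $\mathcal{X}$, hence disjoint from the codimension $\ge2$ centres of $f$ and from $\Supp E$ (symmetrically for $E'$); and $(f^*K_{\mathcal{X}/C}\cdot Z)=0$ for any cycle $Z$ supported on $g^{-1}(p)$, since $f^*K_{\mathcal{X}/C}$ restricted to $g^{-1}(p)=f^{-1}(\pi^{-1}(p))$ is the pullback of $K_{\mathcal{X}/C}|_{\pi^{-1}(p)}=K_{\pi^{-1}(p)}\sim_{\Q}0$, the central fibre being K‑trivial. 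As $((f')^*\mathcal{L}')^{n}-(f^*\mathcal{L})^{n}$ is a cycle supported on $g^{-1}(p)$, substituting $K_{\mathcal{Y}/C}=f^*K_{\mathcal{X}/C}+E$ and expanding leaves exactly
\[
\deg\lambda_{\mathrm{CM}}(\mathcal{X}',\mathcal{L}')-\deg\lambda_{\mathrm{CM}}(\mathcal{X},\mathcal{L})=c_n\bigl(E\cdot(f')^{*}(\mathcal{L}'^{\,n})\bigr).
\]

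The crux is the effectivity of $E$. For a prime divisor $F$ over $p$ the discrepancy satisfies $a(F;\mathcal{X})=a(F;\mathcal{X},\pi^{-1}(p))+(\text{multiplicity of }F\text{ in }g^{*}(p))$; since the central fibre $\pi^{-1}(p)=\pi^{*}(p)$ is slc (resp.\ log terminal), inversion of adjunction makes $(\mathcal{X},\pi^{-1}(p))$ log canonical (resp.\ plt) near $\pi^{-1}(p)$, so $a(F;\mathcal{X},\pi^{-1}(p))\ge-1$ (resp.\ $>-1$), while $F$ lies over $p$ so its multiplicity in $g^{*}(p)$ is $\ge1$. Hence $a(F;\mathcal{X})\ge0$ (resp.\ $>0$), i.e.\ $E\ge0$ (resp.\ $E>0$ along every $f$‑exceptional prime). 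As $(f')^{*}(\mathcal{L}'^{\,n})$ is an effective $1$‑cycle, $\bigl(E\cdot(f')^{*}(\mathcal{L}'^{\,n})\bigr)\ge0$, giving the non‑strict inequality.

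For the strict statement (central fibre log terminal, $\mathcal{X}\not\cong\mathcal{X}'$), suppose the last intersection number were $0$. Since $E>0$ along every $f$‑exceptional prime and $(f')^{*}\mathcal{L}'$ is nef, every $f$‑exceptional prime $F$ would satisfy $((f')^{*}\mathcal{L}')^{n}\cdot F=0$, forcing $\dim f'(F)<\dim F$, i.e.\ $\mathrm{Exc}(f)\subseteq\mathrm{Exc}(f')$. Now a log terminal variety is normal, hence irreducible, so $\pi^{-1}(p)$ is a single prime divisor on $\mathcal{X}$; its strict transform is the only vertical‑over‑$p$ prime of $\mathcal{Y}$ that is not $f$‑exceptional, and one checks it is not $f'$‑exceptional either, whence $\mathrm{Exc}(f)=\mathrm{Exc}(f')$ and $\mathcal{X}\dashrightarrow\mathcal{X}'$ is a small modification; but a non‑trivial small modification cannot carry the ample $\mathcal{L}$ to the ample $\mathcal{L}'$, because on $\mathcal{Y}$ the two pullbacks differ by $\Gamma$, which modulo $g^{*}(p)$ is a combination of $f$‑exceptional divisors and so numerically trivial on the flopped curves --- contradicting $\mathcal{X}\not\cong\mathcal{X}'$. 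Thus the inequality is strict. The step I expect to be the main obstacle is precisely this strictness argument --- excluding flop‑type small modifications compatible with both polarisations, where irreducibility of the log terminal central fibre is essential --- together with carrying out cleanly the normalisation/conductor bookkeeping of the first paragraph in the non‑normal slc case; the remainder is essentially projection‑formula calculus.
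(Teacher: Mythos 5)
Your argument is correct and is essentially the same as the one in the cited sources \cite[Lemma 4.2(i)]{Od.Fano}, \cite[2.14(1)]{Od.Fal} (the paper itself only restates the lemma and defers to those references): reduce the CM degree to $c_n(K_{\mathcal{X}/C}\cdot\mathcal{L}^{n})$ using K-triviality of the fibres, compare on a common model to express the difference as $c_n\bigl(E\cdot((f')^{*}\mathcal{L}')^{n}\bigr)$ with $E$ the relative discrepancy divisor, and obtain $E\ge 0$ (resp.\ $E>0$ on every $f$-exceptional prime) from inversion of adjunction applied to $(\mathcal{X},\pi^{-1}(p))$, with strictness coming from the fact that a small modification matching the two relatively ample polarizations up to a multiple of the irreducible central fibre forces $\mathcal{X}\cong\mathcal{X}'$ via the relative section rings. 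The only places needing more care than you give them are exactly the ones you flag: the conductor bookkeeping when $\mathcal{X}$ is genuinely non-normal, and making sense of $(K_{\mathcal{X}'/C}\cdot\mathcal{L}'^{n})$ when $\mathcal{X}'$ is not assumed $\Q$-Gorenstein, which is where the cited proofs do their remaining technical work.
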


\begin{Cor}[same references and \cite{Bou}]\label{CM.min2}
Consider a punctured ($\Q$-Gorenstein)
family of polarized log terminal K-trivial projective 
varieties 
$(\X^{o},\mathcal{L}^{o})\to (C\setminus p)$, 
where $C$ is a smooth curve and $p$ a closed point, 
 and suppose the existence of its completion 
$(\X,\mathcal{L})\to C$ 
with a log terminal K-trivial fiber $\pi^{-1}(p)$. 
Then, 
there is no other completion 
$(\X,\mathcal{L})\to C$ 
with semi-log-canonical K-trivial $\pi^{-1}(p)$. 
\end{Cor}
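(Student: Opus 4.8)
The plan is to obtain a contradiction by using the CM-degree comparison of Lemma~\ref{CM.min} in \emph{both} directions between the two competing completions. So suppose, toward a contradiction, that in addition to the given completion $(\X,\mathcal{L})\to C$ with log terminal K-trivial central fibre $\pi^{-1}(p)$ there is a second completion $(\X',\mathcal{L}')\to C$, not isomorphic to it over $C$, whose central fibre is semi-log-canonical K-trivial (if the two were isomorphic there would be nothing to prove). Both extend the same punctured polarized family $(\X^{o},\mathcal{L}^{o})\to(C\setminus p)$, so they are isomorphic as polarized families over $C\setminus p$; hence each is an admissible choice for the second, comparison family in Lemma~\ref{CM.min} relative to the other.

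First I would feed $(\X,\mathcal{L})$ into Lemma~\ref{CM.min} as the base family: it is a $\Q$-Gorenstein flat projective family of semi-log-canonical K-trivial varieties whose special fibre $\pi^{-1}(p)$ is, by hypothesis, log terminal, so the lemma applies in its stronger form and, applied to the second completion $(\X',\mathcal{L}')$, yields the \emph{strict} inequality $\deg\lambda_{\mathrm{CM}}(\X,\mathcal{L})<\deg\lambda_{\mathrm{CM}}(\X',\mathcal{L}')$. Then I would run Lemma~\ref{CM.min} again with the roles reversed, taking $(\X',\mathcal{L}')$ as the base --- still semi-log-canonical K-trivial, but with no log terminality assumed at $p$, so only the non-strict form is available --- and $(\X,\mathcal{L})$ as the comparison family, which gives $\deg\lambda_{\mathrm{CM}}(\X',\mathcal{L}')\le\deg\lambda_{\mathrm{CM}}(\X,\mathcal{L})$. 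Concatenating the two gives $\deg\lambda_{\mathrm{CM}}(\X,\mathcal{L})<\deg\lambda_{\mathrm{CM}}(\X,\mathcal{L})$, which is absurd, so no such $(\X',\mathcal{L}')$ exists. (The attribution to \cite{Bou} suggests the same dichotomy can be packaged through an energy-functional reading of the CM degree, but Lemma~\ref{CM.min} already suffices.)

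The hard part, such as it is, lies not in the formal deduction but in seeing what makes it go through: running Lemma~\ref{CM.min} symmetrically would only produce two mutually compatible non-strict inequalities, and what breaks the tie is precisely the availability of the strict inequality, which in turn rests on the \emph{given} completion being log terminal (equivalently K-stable) at $p$. So the only points requiring attention are to confirm that $(\X,\mathcal{L})$ satisfies the hypothesis of the strict case and that $(\X',\mathcal{L}')$ really is non-isomorphic to it over $C$ (so that the inequality is genuinely strict); checking that both families are $\Q$-Gorenstein, flat, projective and agree away from $\pi^{-1}(p)$ --- which is what lets one invoke the lemma at all --- is routine once one unwinds that a completion here means exactly a $\Q$-Gorenstein family of slc K-trivial fibres extending $(\X^{o},\mathcal{L}^{o})$.
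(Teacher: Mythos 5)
Your proposal is correct and is exactly the intended derivation: the paper states Corollary~\ref{CM.min2} as an immediate consequence of Lemma~\ref{CM.min} (with the same references), and the way to extract it is precisely your two-sided application --- strict inequality from the log terminal completion as base, non-strict inequality with the roles reversed, yielding a contradiction. The only point you might note in passing is that $\deg\lambda_{\rm CM}$ requires a projective base, so for a non-projective smooth curve $C$ one compares the two completions after a standard compactification (the families agree away from $p$, so the difference of CM degrees is supported at $p$).
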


We conclude the proof of Theorem \ref{include.thm}. 
\end{proof}

%%%%%%%%%%%%%%%%%%%

\section{Log minimality of weak K-moduli}\label{wKmod.sec}

\subsection{Statements}

We now proceed to the log minimality discussion. 
Our setup and the list of assumptions is as follows which we believe 
to be ubiquitous. Indeed, examples include those recently 
constructed in \cite{AET, AE} (also cf., \cite{ABE}). 
In this section and next section,  
we assume $k=\C$. 

\subsubsection{Notation and Assumptions}\label{Notation}

In this section, we work on the {\it normalization of}  
weak K-moduli and 
put the following natural assumption and notation. 
Although we take normalization, we still denote it as 
$\mathcal{M}^{o}$ for simplicity. 
The following conditions are designed to fit to 
various known explicit examples of the compactifications, as we explain later. 

\begin{itemize}

\item 
$G$ is a finite group, which could be a priori trivial. 

\item $S$ is a klt normal projective variety, 
$S^{o}$ is its Zariski open subset, 
$\Delta=S\setminus S^{o}$ is purely codimension $1$ such that 
$G$ acts on $S$ preserving $\Delta$ such that 
$(S,\Delta)$ is log canonical. For instance, any 
smooth $S$ with normal crossing divisor $\Delta$ is allowed. 

\item K-trivial varieties are parametrized by: 
$\pi\colon \mathcal{X}\to S$ a $G$-equivariant flat 
proper morphism from an algebraic space $\X$, 
with the relative dimension $n$, 
such that 
\begin{enumerate}
\item For any $s\in S^{o}$, $\pi^{-1}(s)$ is K-trivial 
projective variety with only 
canonical singularities, 
\item there is a $\pi|_{S^{o}}$-relatively ample 
line bundle $\mathcal{L}^{o}$ on $\pi^{-1}(S^{o})$, 
\item for any closed point $s\in \Delta=S\setminus S^{o}$, the 
$\pi$-fiber $\pi^{-1}(s)$ is again a K-trivial variety 
with only semi-log-canonical singularities 
\end{enumerate}
\begin{Rem}
Some technical remarks are in order here. 
Note that we are not assuming $\X$ itself is a variety 
while only fiberwise algebraicity is assumed. 
Compare with the classical Kulikov model situation 
(cf., \cite{Kul},\cite{PP}, 
Theorem \ref{KPP.geom} \eqref{nn.alg} in the appendix). 
From our singularities assumption, note that 
any $\pi^{-1}(s)$ for 
$s\in S^{o}$ (resp., $s\in \Delta$) 
is K-stable (resp., K-semistable) 
for any polarization by \cite{Od}. 

Also $\pi$ is automatically a ($\Q$-)Gorenstein family 
and locally stable in the sense of Koll\'ar \cite{Kol}. 
\end{Rem}

\item 
On each log canonical center $T$ of $(S,\Delta)$, whose support is  automatically inside 
inside $\Delta$, write $\pi^{-1}(T)$ as $\X_{T}$. Then, 
%either one of 
the following hold: 
%\begin{itemize}
%\item 
%$\X_{T}\to T$ is analytically locally trivial 
%(e.g., if the total space is simple normal crossing whose 
%strata are all smooth over $T$), 
%\item or more generally, if 
there is a birational proper modification 
$\X_{T}'\to \X_{T}$ such that 
the composite $\X_{T}'\to T$ is an analytically 
locally trivial fibration with snc K-trivial fibers. 
We assume either the relative dimension $n$ is $2$ or 
that for general points $s$ of $S$, $\X_{t}$ is 
irreducible holomorphic symplectic manifold and the 
fibers of $\X_{T}'$ occur as good degenerations 
in the sense of \cite[4.2]{Nag.mon}. 

(For $n=2$, imagining with Brieskorn simultaneous resolution of 
deforming ADE singularities may help understanding. 
See also our Appendix.) 
%\end{itemize}

\item We assume 
$\overline{\mathcal{M}}:=[S/G]$ is the normalization of some 
moduli stack $[S'/G]$ i.e., 
closed points $s, s' \in S'$ parametrize the same projective surface 
if and only if $Gs=Gs'$. The naturality of taking 
normalization can be seen e.g. in \cite{ABE}. 

\item By Keel-Mori theorem \cite{KeelMori}, $\overline{\mathcal{M}}$ has a coarse moduli algebraic space 
$\overline{\mathcal{M}}\to \overline{M}$ which we assume to be projective. 

\item Denote the irreducible decomposition of the 
branch divisor of $S\to M$ as $\cup_{i} D_{i}$ 
such that the branch degree along $D_{i}$ is $m_{i}$. 
We set $D:=\sum_{i}\frac{m_{i}-1}{m_{i}}D_{i}$. 

\end{itemize}

\begin{Thm}[Log minimality]\label{log.min}
Under the above assumptions, $\overline{M}$ 
is log minimal with the natural 
boundary $\Q$-divisor i.e., $K_{\overline{M}}+D+\Delta$ is nef so that 
$(\overline{M},D+\Delta)$ is a (log canonical) log minimal model. 
Equivalently, $K_{S}+\Delta$ is nef. 
\end{Thm}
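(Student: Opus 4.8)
The plan is to pass to the finite cover $S$, reduce the statement to nefness of $K_{S}+\Delta$, identify $K_{S}+\Delta$ numerically with a positive multiple of a Hodge line bundle, and then use the refined cone theorem to localise the remaining nefness check to curves where the Hodge-theoretic input is available. For the reduction: the composite $S\to\overline{\mathcal{M}}=[S/G]\to\overline{M}$ is finite, its branch divisor is $\bigcup_{i}D_{i}$ with ramification indices $m_{i}$, and the coefficients $\frac{m_{i}-1}{m_{i}}$ in $D$ are precisely the Riemann--Hurwitz coefficients, so the pullback of $K_{\overline{M}}+D+\Delta$ equals $K_{S}+\Delta$. Since nefness of a $\Q$-Cartier class is insensitive to finite pullback, $K_{\overline{M}}+D+\Delta$ is nef iff $K_{S}+\Delta$ is, and the log canonicity of $(\overline{M},D+\Delta)$ follows from that of $(S,\Delta)$ since $D$ has standard coefficients. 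So it suffices to prove $K_{S}+\Delta$ is nef.

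Next, the Hodge identity over $S^{o}$. The family $\pi$ carries a polarised variation of Hodge structure $\mathbb{V}$ of weight two: for $n=2$ it is the (primitive part of the) $R^{2}\pi_{*}\Q$, and for $n>2$, by the hypothesis that the general fibre is irreducible holomorphic symplectic, it is the Beauville--Bogomolov-primitive part of $R^{2}\pi_{*}\Q$. Write $\lambda=\mathcal{H}^{2,0}$ for the Hodge line bundle (equal to $\pi_{*}\omega_{\X/S}$ when $n=2$) and $\mathcal{H}^{1,1}$ for the middle graded piece, which has rank $d:=\dim S$. Local Torelli for $K$-trivial surfaces, resp. Beauville's local Torelli for irreducible symplectic manifolds, together with Bogomolov--Tian--Todorov unobstructedness, makes the period map an immersion on $S^{o}$, and since its differential is the Kodaira--Spencer map into a bundle of the same rank it is an isomorphism $T_{S^{o}}\otimes\lambda\cong\mathcal{H}^{1,1}$. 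Dualising and taking determinants,
\[
K_{S^{o}}=\det\Omega^{1}_{S^{o}}=\lambda^{\otimes d}\otimes(\det\mathcal{H}^{1,1})^{-1}.
\]
By the second Hodge--Riemann bilinear relation the polarisation restricts to a perfect pairing on $\mathcal{H}^{1,1}$, so $\mathcal{H}^{1,1}\cong(\mathcal{H}^{1,1})^{\vee}$ and $(\det\mathcal{H}^{1,1})^{\otimes 2}\cong\OO_{S^{o}}$; hence $K_{S^{o}}\equiv d\,\lambda$ numerically.

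Now extend to the boundary and conclude. The monodromies around the components of $\Delta$ are quasi-unipotent (the degenerations are of semistable/slc type), so the canonical extensions $\bar\lambda$ and $\bar{\mathcal{H}}^{1,1}$ are defined, and by the nilpotent orbit theorem the Hodge sub-bundles extend to sub-bundles with the polarisation extending to a perfect pairing on the graded pieces; in particular $\bar{\mathcal{H}}^{1,1}$ is again self-dual with $\det$ numerically trivial. At the generic point of each component of $\Delta$ the degeneration is a one-parameter degeneration with non-trivial monodromy, where the logarithmic Kodaira--Spencer map remains an isomorphism $T_{S}(-\log\Delta)\otimes\bar\lambda\cong\bar{\mathcal{H}}^{1,1}$; as $S$ is normal, hence regular in codimension one, comparing $\Omega^{1}_{S}(\log\Delta)$ with $(\bar{\mathcal{H}}^{1,1})^{\vee}\otimes\bar\lambda$ there gives the equality of divisor classes
\[
K_{S}+\Delta\equiv d\,\bar\lambda .
\]
Finally $\bar\lambda$ is nef, and this is where the refined cone theorem \cite{Sva,Fjn.hyp} enters: were $K_{S}+\Delta$ not nef there would be a $(K_{S}+\Delta)$-negative rational curve $C$; if $C$ meets $S^{o}$ then $(K_{S}+\Delta)\cdot C=d\,(\bar\lambda\cdot C)\geq 0$ by Fujita--Kawamata--Viehweg semipositivity, resp. Fujino's semipositivity for slc families, applied to $\pi^{-1}(C)\to C$, a contradiction; if $C$ lies in a log canonical centre $T$, then the analytically locally trivial model $\X_{T}'\to T$ (here the hypothesis $n=2$, or the good-degeneration condition of \cite{Nag.mon}, is used) forces $\mathbb{V}$ to be locally constant along $C$, so $\bar\lambda\cdot C=0$, and the resulting local product structure of $(S,\Delta)$ along $T$ reduces $(K_{S}+\Delta)\cdot C$ to an intersection number on $T$ which is nonnegative by descending induction on $\dim T$. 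Either way $K_{S}+\Delta$ is nef, and Theorem \ref{log.min} follows.

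The crux is the boundary analysis in the last paragraph: establishing the logarithmic Kodaira--Spencer isomorphism (equivalently the comparison $K_{S}+\Delta\equiv d\,\bar\lambda$) in codimension one along \emph{every} boundary component, and controlling $\mathbb{V}$, hence $\bar\lambda$, along the deeper log canonical strata. This is exactly what dictates the structural hypotheses on the degenerations over log canonical centres and the restriction to relative dimension $2$ or to irreducible symplectic fibres with good degenerations; checking that the known compactifications (\cite{AET,AE,ABE}, and others) satisfy these hypotheses --- which is part of the motivation for the appendix on Kulikov models --- is a separate task.
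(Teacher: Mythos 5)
Your route is genuinely different from the paper's, and it has gaps that the paper's hypotheses do not let you close. The paper does not identify $K_{S}+\Delta$ with a Hodge line bundle at all: it invokes the Mori-hyperbolicity criterion of \cite{Sva, Fjn.hyp} to reduce nefness of $K_{S}+\Delta$ to the statement that no log canonical center of $(S,\Delta)$ admits a non-constant morphism from $\mathbb{A}^{1}$, and then rules out such morphisms by an isotriviality theorem (Theorem \ref{nosncfamily}) for snc K-trivial families over $\mathbb{A}^{1}$, proved with admissible variations of \emph{mixed} Hodge structures, combined with the effectivity of the moduli family. Your proposal instead tries to prove the numerical identity $K_{S}+\Delta\equiv d\,\bar\lambda$ and deduce nefness from semipositivity of $\bar\lambda$.

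The concrete problems with your version are the following. First, the identity $K_{S^{o}}\equiv d\,\lambda$ requires the period map to be a local \emph{isomorphism} onto the period domain, i.e., that the Kodaira--Spencer map $T_{S^{o}}\otimes\lambda\to\mathcal{H}^{1,1}$ be surjective, not merely injective. This maximal-variation hypothesis is not among the assumptions of \S\ref{Notation} (the stated effectivity of the family gives injectivity at best, and the remark after Theorem \ref{log.min} explicitly says the maximal-variation condition over the centers $T$ can be weakened); if $T_{S^{o}}\otimes\lambda$ is only a proper subbundle of $\mathcal{H}^{1,1}$, the determinant computation breaks down. Second, the extension $K_{S}+\Delta\equiv d\,\bar\lambda$ across the boundary rests on the unproven assertions that the monodromy around \emph{every} component of $\Delta$ is non-trivial and that the logarithmic Kodaira--Spencer map is an isomorphism there; nothing in the hypotheses guarantees this (a component of $\Delta$ may parametrize klt fibers with finite monodromy, in which case the log correction $+D_{i}$ is wrong). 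Third, the logic of your last step is inconsistent: if the identity $K_{S}+\Delta\equiv d\,\bar\lambda$ really held (a numerical equivalence of divisor classes established in codimension one on normal $S$ holds globally), then nefness would follow at once from semipositivity of the canonically extended Hodge bundle, and the refined cone theorem would be superfluous; conversely, invoking the cone theorem does not repair the curves along which the identity was never established. The cone theorem's correct role here is the paper's: it converts the nefness problem into a hyperbolicity statement about the lc strata, which is exactly what the structural hypotheses on $\X'_{T}\to T$ and Theorem \ref{nosncfamily} are designed to verify.
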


\begin{Rem}
A related very recent result for the log smooth case 
under local Torelli type assumption, 
whose proof uses curvature consideration 
appears in \cite[Theorem 3.1($=$1.17)]{GGR1}.
\end{Rem}

\begin{Rem}
One can also regard this theorem as a variant of another recent 
result 
\cite[Theorem 1.2]{AE} which claims certain 
geometric compactifications under similar conditions are 
semi-toric in the sense of Looijenga \cite{Looi2}, which are 
log minimal as easily follows from \cite[7.18]{AE}. 
On the other hand, the construction of $\X_{T}$ 
in \cite{AET, ABE, AE} starts with $\X_{T}'$ and then 
use contraction to obtain $\X_{T}$ as their key idea. 
As it follows from the proof, Theorem 
\ref{log.min} holds also if we weaken the maximal variation-ness of 
$\X'_{T}$ over $T$ {\it for each (fixed)} $T$. 
\end{Rem}

The following provides another partial reconstruction of 
Satake-Baily-Borel compactification or its analogue a priori. 

\begin{Cor}[Alternative construction of Baily-Borel compactification]
We work under the same setup as above \S \ref{Notation}, 
Theorem \ref{log.min}. 
If $(S,\Delta)$ is dlt and $\dim(S)\le 4$, 
the normalized weak K-moduli $\overline{M}=S/G$ has a birational 
morphism to its log canonical 
model $\overline{M}^{lc}$ which exists. If 
$S^{o}$ is a locally Hermitian symmetric space, 
the lc model $\overline{M}^{lc}$ coincides with the 
Satake-Baily-Borel compactification. 
\end{Cor}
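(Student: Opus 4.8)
The plan is to promote the nefness of $K_S+\Delta$ provided by Theorem~\ref{log.min} first to semiampleness and bigness, so that the log canonical model exists and receives a birational morphism from $\overline M$, and then to identify that model with the Baily-Borel compactification. I work on the finite cover $S$ rather than on $\overline M=S/G$: writing $\pi\colon S\to\overline M$, one has $K_S+\Delta=\pi^{*}(K_{\overline M}+D+\Delta)$ by Hurwitz, so $K_{\overline M}+D+\Delta$ is nef (resp.\ big, resp.\ semiample) exactly when $K_S+\Delta$ is, and $\overline M^{lc}=S^{lc}/G$; moreover $\Delta=S\setminus S^o$ is reduced and $(S,\Delta)$ is dlt by hypothesis. \emph{Bigness} of $K_S+\Delta$ comes from the maximal variation of $\X\to S$ over $S^o$ (built into the effectivity assumption of \S\ref{Notation}: distinct $G$-orbits carry non-isomorphic polarized fibres) via the log-general-type-ness of bases of maximally varying families (\cite{Vie} and subsequent refinements); in the locally Hermitian symmetric case it is in any event immediate from ampleness of the automorphic line bundle on the Baily-Borel compactification.

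\emph{Semiampleness} is where $\dim S\le 4$ enters. Every proper log canonical centre $T$ of the dlt pair $(S,\Delta)$ lies in $\lfloor\Delta\rfloor=\Delta$, hence has $\dim T\le\dim S-1\le 3$; by adjunction $(K_S+\Delta)|_T=K_T+\Delta_T$ with $(T,\Delta_T)$ again dlt, and $K_T+\Delta_T$ is nef as a restriction of a nef divisor, hence semiample by the log abundance theorem in dimension $\le 3$ (Keel-Matsuki-McKernan, Fujino). Applying the base-point-free theorem of Reid-Fukuda type for log canonical pairs \cite{Fjn.bpf} to $m(K_S+\Delta)$ for $m\gg 0$ --- using that $(m-1)(K_S+\Delta)$ is nef and big on $S$ while its restrictions to all lc centres are semiample --- we conclude that $K_S+\Delta$ is semiample. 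Then $S^{lc}:=\Proj\bigoplus_{m\ge0}H^0(S,\OO(m(K_S+\Delta)))$ is a normal projective variety, $S\to S^{lc}$ is a morphism, and it is birational because $K_S+\Delta$ is big; dividing by $G$ gives the first assertion.

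For the \emph{identification with the Baily-Borel compactification}, assume $S^o=\Gamma\backslash\mathcal{D}$ is locally Hermitian symmetric (possibly with quotient singularities), and fix an AMRT toroidal compactification $S^{\mathrm{tor}}\supset S^o$, chosen smooth with reduced snc boundary $D_\infty$. By Mumford's proportionality \cite{Mum77}, $K_{S^{\mathrm{tor}}}+D_\infty$ is a positive rational multiple of the canonical extension of the automorphic line bundle, so that the log canonical ring of $(S^{\mathrm{tor}},D_\infty)$ is, up to a Veronese twist, the ring of modular forms $\bigoplus_w M_w(\Gamma)$, whose $\Proj$ is the Baily-Borel compactification $S^{BB}$ (\cite{AMRT}, cf.\ \cite{Mum77}). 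Taking a common resolution $Y\to S$, $Y\to S^{\mathrm{tor}}$ and letting $\Gamma_Y$ be the full reduced boundary over $S^o$, one has, for all sufficiently divisible $m$, equalities of subspaces of $H^0(S^o,\OO(mK_{S^o}))$
\[
H^0(S,\OO(m(K_S+\Delta)))=H^0(Y,\OO(m(K_Y+\Gamma_Y)))=H^0(S^{\mathrm{tor}},\OO(m(K_{S^{\mathrm{tor}}}+D_\infty)))\,,
\]
because for lc pairs with reduced boundary the divisor $m(K_Y+\Gamma_Y)$ differs from the pullback of $m(K_S+\Delta)$ (resp.\ of $m(K_{S^{\mathrm{tor}}}+D_\infty)$) by an effective exceptional divisor, so that the outer groups inject into the middle one by pullback while the middle one injects back by push-forward. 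Hence the log canonical rings of $(S,\Delta)$ and $(S^{\mathrm{tor}},D_\infty)$ agree, so $S^{lc}=S^{BB}$ and $\overline M^{lc}=S^{BB}/G=\overline M^{BB}$ is the Satake-Baily-Borel compactification of $M^o$.

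The main obstacle is the semiampleness step: converting the nefness furnished by Theorem~\ref{log.min} into semiampleness is precisely where $\dim S\le 4$ is used, through the reduction to the log abundance theorem in dimension $\le 3$ on the log canonical centres; in higher dimension this step would rest on the log abundance conjecture. A subordinate point is to pin down the precise relation between the natural boundary $D+\Delta$ of the weak K-moduli and Mumford's normalization of the automorphic line bundle in the last paragraph --- classical, but to be matched with care.
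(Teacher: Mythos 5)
Your argument is correct in outline and reaches the same conclusion, but it is a much more expanded route than the paper's, which disposes of the first assertion in two lines: existence of the log canonical model is quoted directly from Fujino's finite generation of the log canonical ring in dimension four \cite{Fjn4}, and the fact that $\overline{M}\dashrightarrow\overline{M}^{lc}$ is a morphism is deduced from the nefness of $K_{\overline M}+D+\Delta$ supplied by Theorem \ref{log.min}; the identification with the Satake--Baily--Borel compactification is merely asserted. What you do differently: (1) you reprove the semiampleness by hand, via log abundance in dimension $\le 3$ on the lc centres plus a Reid--Fukuda-type basepoint-free theorem --- this is essentially the mechanism hidden inside \cite{Fjn4}, so the two approaches use the same low-dimensional input, but note that your citation \cite{Fjn.bpf} points to Fujino's effective basepoint-freeness for \emph{slc surfaces}, not to the Reid--Fukuda-type theorem you actually need (that result is in a different paper of Fujino and, in the form you invoke it, requires some care since $(m-1)(K_S+\Delta)$ need not be \emph{log} big); (2) you supply a bigness argument via maximal variation and Viehweg--Zuo-type hyperbolicity --- the paper leaves this implicit, yet without bigness the phrase ``birational morphism to the lc model'' does not parse, so this is a genuinely useful addition, though the right references here are \cite{VZ2} or \cite{Deng} rather than \cite{Vie}, and in the Hermitian symmetric case it is immediate as you say; (3) your proof of the Baily--Borel identification --- Mumford proportionality \cite{Mum77}, \cite{AMRT} plus equality of log canonical rings of the two lc pairs $(S,\Delta)$ and $(S^{\mathrm{tor}},D_\infty)$ computed on a common log resolution --- is the standard correct argument and fills a gap the paper does not address at all. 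In short, your proposal buys a self-contained and more honest proof at the cost of extra machinery; the paper buys brevity by outsourcing the hard steps to \cite{Fjn4} and to classical facts about automorphic forms.
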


\begin{proof}
The existence of log canonical model 
$\overline{M}^{lc}$ follows from \cite{Fjn4} 
and the proof that $\overline{M}\dashrightarrow 
\overline{M}^{lc}$ is a morphism follows from 
Theorem \ref{log.min}. 
\end{proof}
This gives a rather partial confirmation of 
\cite[Conjecture B.1 (ii)]{galaxy}, but it also seems to be the first 
arguments, which do not logically use the theory of Shimura varieties. 

\subsection{Admissible variation of mixed Hodge structures}

Our approach to Theorem \ref{log.min} 
is partially algebraic and partially analytic. 
The algebraic part is connected to a logarithmic refinement of the 
cone theorem. In particular, we use the log minimality criterion 
\cite{Sva, Fjn.hyp} via 
Mori hyperbolicity which was introduced in 
\cite{LZ}. The analytic part involves variation of 
{\it mixed} Hodge structures. 
Indeed, 
by using {\it loc.cit}, we can and do reduce Theorem \ref{log.min} to 
the following claim; 
Theorem \ref{nosncfamily} on the isotriviality of certain 
singular families and then prove it by crucially 
using variation of mixed Hodge structures. 
Its relatively smooth case is known by \cite[Theorem 1.4 (iii)]{VZ2} 
as its Corollary, but the point below is that 
we generalize it to allow even 
non-normal fibers so that the same results as smooth case 
(\cite{VZ1, VZ2})  
do not literally hold. 
Also note that taking normalization does not 
reduce to normal case, except for the case \eqref{curve}, 
since the variation of 
glueing data is usually important here. 

\begin{Thm}\label{nosncfamily}
Consider any projective flat 
family $f\colon \mathcal{X}\to \mathbb{A}^{1}$ of relative dimension $n$, such that 
$\X$ is simple normal crossing variety whose strata maps smoothly over $\mathbb{A}^1$, 
\footnote{in particular, it is basic slc-trivial fibration 
in the sense of Fujino} and 
(relative) dualizing sheaf is trivial i.e., 
$\omega_{\mathcal{X}/\mathbb{A}^{1}}\sim_{/\mathbb{A}^{1}} 
\mathcal{O}_{\X}$. 
Then it follows that 
for the irreducible decomposition $\X=\cup_{i} \X_{i}$, 
at least one strata $S_{I}:=\cap_{i\in I}\X_{i}$ with 
its natural boundary divisor 
$D(S_{I}):=
(\cup_{j\notin I}\X_{j})\cap S_{I}$ 
is isotrivial i.e., 
the isomorphic class of the 
pair 
$$(S_{I}\cap \pi^{-1}(t), D(S_{I})\cap \pi^{-1}(t))$$ 
 does not depend on $t$. 

Furthermore, 
assume that at least one of the following holds. 

\begin{enumerate}
\item \label{curve} 
$n=1$ (in this case, 
$\omega_{\mathcal{X}/\mathbb{A}^{1}}\nsim_{/\mathbb{A}^{1}} 
\mathcal{O}_{\X}$ is also allowed) or 
\item  \label{k3}
$n=2$ (not assuming d-semistability of the fibers i.e., they may not be smoothable), or  
\item \label{hk}
$n$ is even and 
the fibers $X_{t} (t\in \A^{1})$ 
occur as good degenerations of holomorphic symplectic 
varieties in the sense of \cite[Definition 4.2]{Nag.mon} and its mixed Hodge structures on $H^{2}(\X_{t},\Z)$ \cite{Del.III}
are \underline{not} constant i.e., local (mixed) Torelli theorem holds 
for $\X/\A^{1}$. 
\end{enumerate}

Then $f$ is isotrivial i.e., 
the closed fibers $\X_{t}$ of $f$ are isomorphic to each other. 
\end{Thm}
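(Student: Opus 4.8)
The plan is to reduce every claim to the constancy of the (admissible) variation of mixed Hodge structure carried by $R^{\bullet}f_{*}\mathbb{Q}$ over $\mathbb{A}^{1}$ — together with the analogous variations on all the strata — and then to feed that constancy into a Torelli-type statement tailored to each of the three listed situations. What makes $\mathbb{A}^{1}$ special is that it is simply connected, so all these local systems, and the weight filtrations on them (which are flat because the combinatorial type of the fibres is constant, the strata being smooth over $\mathbb{A}^{1}$), are trivial; moreover a small loop around $\infty\in\mathbb{P}^{1}$ bounds a disc inside $\mathbb{A}^{1}$, so the local monodromy at $\infty$ is also trivial and the variations extend across $\infty$.

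I would first dispatch the weaker assertion, that some stratum with its boundary is isotrivial. Iterating the residue/adjunction isomorphism for the simple normal crossing total space turns $\omega_{\mathcal{X}/\mathbb{A}^{1}}\cong\mathcal{O}_{\mathcal{X}}$ into $\omega_{S_{I}/\mathbb{A}^{1}}(D(S_{I}))\cong\mathcal{O}_{S_{I}}$ for every stratum, so each $(S_{I}\to\mathbb{A}^{1},D(S_{I}))$ is a family of log Calabi--Yau pairs; in particular, for a stratum $S_{I_{0}}$ of maximal depth one has $D(S_{I_{0}})=\emptyset$ (otherwise it would meet a further component), so $S_{I_{0}}\to\mathbb{A}^{1}$ is a smooth family of Calabi--Yau varieties (a finite \'etale cover of $\mathbb{A}^{1}$, hence trivial, if the relative dimension is $0$). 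Its variation of Hodge structure has trivial monodromy, so extends to a polarized variation over $\mathbb{P}^{1}$ and is therefore constant (rigidity of variations of Hodge structure over a simply connected projective base, i.e.\ the theorem of the fixed part); local Torelli for varieties with trivial canonical class then promotes this to isotriviality of $S_{I_{0}}$, which is the required witness. In the relatively smooth case this is also a corollary of \cite[Theorem 1.4(iii)]{VZ2}.

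For full isotriviality I would run the same machine on the whole variation of mixed Hodge structure of $\mathcal{X}_{t}$. Since $f$ is a basic slc-trivial fibration, this variation is admissible (after Fujino--Fujisawa; equivalently one may work with the associated smooth mixed Hodge module). The period map $\mathbb{A}^{1}\to\check{\mathcal{D}}$ into the projective compact dual of the classifying space of graded-polarized mixed Hodge structures is algebraic, hence extends to $\mathbb{P}^{1}$, and admissibility of the limit mixed Hodge structure at $\infty$ shows this extension still lands in $\mathcal{D}_{\mathrm{MHS}}$. Composing with the projection to the classifying space of the graded pieces gives a variation of Hodge structure over $\mathbb{P}^{1}$, constant as above; hence the extended period map takes values in a single fibre of $\mathcal{D}_{\mathrm{MHS}}\to\mathcal{D}_{\mathrm{gr}}$, and such a fibre is biholomorphic to an affine space, so a holomorphic map to it from the compact $\mathbb{P}^{1}$ is constant. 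Thus the entire variation of mixed Hodge structure of $\mathcal{X}_{t}$ — including all the gluing data encoded in the extension classes along the double locus — is independent of $t$. It remains to reconstruct the (possibly non-normal, possibly non-d-semistable) fibre from this constant data, and here the three cases are used: for $n=1$ via a Torelli theorem for nodal curves (classical Torelli for the normalizations, plus recovery of the dual graph and the gluing points from the weight-zero and extension parts of $H^{1}$); for $n=2$ via a degenerate-$K3$ Torelli in the spirit of Namikawa and of Friedman--Scattone — components pinned down by lattice data carried by the constant local system and by the Hodge structures of their normalizations and double curves, gluing recovered from the extension classes — now pushed through without d-semistability; and for the even-dimensional holomorphic-symplectic case via Nagai's classification of good degenerations \cite[4.2]{Nag.mon} together with the mixed Torelli for $H^{2}$ underlying \cite{Ver}, which is what hypothesis \eqref{hk} packages. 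Constancy of these isomorphism classes, together with the already-fixed combinatorial type, yields an isomorphism of each $\mathcal{X}_{t}$ with a fixed fibre.

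The real obstacle, I expect, is that last step: the Torelli-type reconstruction of the non-normal fibre — and above all of the gluing — from the now-constant variation of mixed Hodge structure, in the non-d-semistable $n=2$ case and in the holomorphic-symplectic case. This is precisely where \cite{VZ1,VZ2} (the smooth case) and \cite{Nag.mon} (the monodromy analysis) must be genuinely strengthened, and where one must check that constancy of the abstract variation really forces isotriviality of the family, i.e.\ that the relevant period map is quasi-finite onto its image rather than merely that the periods are locally constant. A secondary technical point is the bookkeeping, if one instead argues by propagating isotriviality from deepest to shallowest strata, of the log Calabi--Yau structures $\omega_{S_{I}/\mathbb{A}^{1}}(D(S_{I}))\cong\mathcal{O}$ and of the compatibility of the various restriction morphisms of variations of mixed Hodge structure.
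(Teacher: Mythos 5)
Your overall framework (admissible variation of mixed Hodge structures, trivial monodromy over $\A^{1}$, extension across $\infty\in\PP^{1}$) is the right one, and your argument for the first assertion --- adjunction down to a deepest stratum, which is a smooth projective K-trivial family with empty boundary, followed by rigidity of the extended variation of Hodge structure over $\PP^{1}$ and local Torelli for K-trivial manifolds --- is a legitimate alternative to the paper's route, which instead applies Deng's hyperbolicity theorem for log Calabi--Yau families \cite{Deng} to the normalized strata (after subadjunction). But for the main isotriviality assertion your proposal has a genuine gap, which you yourself flag as ``the real obstacle'': after establishing constancy of the mixed Hodge structures you still need to \emph{reconstruct the non-normal fibre, and in particular its gluing data, from the constant periods}. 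No such Torelli theorem exists in the generality required: Friedman--Scattone and Namikawa treat d-semistable, smoothable degenerations, whereas case ($n=2$) of the theorem explicitly drops d-semistability; for good degenerations of holomorphic symplectic varieties only $H^{2}$ is controlled; and for nodal curves even the weight filtration plus extension data on $H^{1}$ does not see the cross-ratios of marked points on rational components (the MHS of a chain or cycle of $\PP^{1}$'s is insensitive to the attaching points), so ``recovery of the gluing from the extension parts of $H^{1}$'' fails there too. Since this reconstruction is exactly the content of the theorem in the hard cases, deferring it means the proof is not complete.

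The paper avoids the issue entirely: it never reconstructs a fibre from its periods, and indeed advertises that no Torelli-type theorem is used. Instead it argues by contradiction with positivity. Non-isotriviality produces a nonzero Kodaira--Spencer section, hence a nonzero Higgs field $\overline{F}^{1}/\overline{F}^{2}\to \overline{F}^{0}/\overline{F}^{1}\otimes\Omega^{1}_{\PP^{1}}(\log[\infty])$ on the Deligne canonical extension of $R^{2}f_{*}f^{-1}\mathcal{O}_{\A^{1}}$. The geometric input is the vanishing $\overline{F}^{2}=0$: for $n=2$ each component of the normalization $\X^{\nu}$ carries a nontrivial effective anticanonical divisor, so $f_{*}\omega_{\X^{\nu}/\A^{1}}=0$, and in the holomorphic symplectic case $f_{*}\Omega^{2}_{\X^{\nu}/\A^{1}}=0$ by hypothesis. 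The Fujino--Fujisawa seminegativity package \cite{FF} then makes the source and target of the Higgs field seminegative, which is incompatible with a nonzero map twisted by the degree $-1$ bundle $\Omega^{1}_{\PP^{1}}(\log[\infty])$. For $n=1$ the same Hodge-bundle argument handles the non-rational components, while rational components are rigidified by the cross-ratio (hyperbolicity of $\mathcal{M}_{0,4}$), not by the MHS. To salvage your route you would have to prove the missing degenerate Torelli statements, which is a substantially harder --- and, in this generality, open --- problem than the theorem itself.
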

Here are some remarks especially on the latter assertion of 
isotriviality of $f$ in case \eqref{k3}. 
\begin{Rem}[On the projectivity assumption]
We expand the details of this remark at 
our Appendix \S \ref{Appendix}. 
Recall that for 
any projective semistable family $\Y$ over a curve 
whose general fibers are K3 surfaces, 
the result of Kulikov-Pinkham-Persson's degeneration 
\cite{Kul, PP} (reviewed as Theorem \ref{KPP.original}) 
is {\it not} necessarily a projective family. 
However from our Theorem \ref{KPP.geom} and its proof using MMP, 
it follows that the closed fibers $\tilde{\X}_{0}$ are all projective even in such a case i.e., fiberwise projectivity holds. 
This explains the naturality of our projectivity assumption. Indeed, 
various part of Theorem \ref{KPP.geom} naturally extend over higher dimensional base case. 
\end{Rem}

\begin{Rem}
Recall that, for instance if $n=2$ with nonempty $0$-dimensional strata 
i.e., such as family of maximally degenerated 
K3 surfaces or abelian surfaces, 
the period spaces 
of such simple normal crossing K-trivial 
surfaces are known to be often algebraic torus 
(cf., e.g., \cite{Carlson, FS}). 
Such arguments may look implying the above Theorem \ref{nosncfamily}. 
However, for instance, there is still a exponential map from $\C$ to 
algebraic torus which could be a priori the (mixed) period map, 
we have to exclude such possibility. We need 
more generalized version of such claim 
for the application to moduli 
compactifications, which is the point of above theorem and its formulation. 
\end{Rem}

\begin{Rem}
One can slightly weaken the locally triviality assumption of 
Theorem \ref{nosncfamily}. 

For instance, suppose $n=2$ and 
neighborhood of 
normal crossing 
singular locus are topologically locally trivial over $\A^{1}$. 
Further, suppose 
$f$ extends to $f\colon \overline{\X}\to \PP^{1}$ and 
the fibers $f^{-1}(t)$ only normal crossing singularities and 
also finite ADE singularities for at most one $t(\neq \infty)$. 
Now we employ the Brieskorn simultaneous resolution for the 
relatively ADE locus to 
reduce to analytically locally trivial case. 
(Compare with our Appendix.) 
Note that for the simultaneous resolution, 
we do not need base change of $\A^{1}$ in such case, 
since the monodromy around $\X_{t}$ with possibly ADE singularities 
is trivial. The triviality follows from its unipotency because 
$\overline{\X}_{\infty}$ is reduced normal crossing by 
\cite{Landman} and also of finite order. 
\end{Rem}

For the actual proof of Theorem \ref{nosncfamily} in 
the above generality, we analyze admissible 
variation of mixed Hodge structures 
(also known as smooth mixed Hodge modules) associated to 
our singular family. 

\begin{proof}[Proof of Theorem \ref{nosncfamily}]
The proof of the former assertion 
follows from \cite{Deng} 
which we apply to the deforming strata (as a log pair). 
Indeed, 
if we consider the normalization $S_{I,t}^{\nu}$ 
of the fibers $S_{I,t}$ of $S_{I}\to \A^{1}$, denoted as 
$\nu_{I,t}\colon S_{I,t}^{\nu}\to S_{I,t}$, the log pairs 
$(S_{I,t}^{\nu}, \nu_{I,t}^{-1}({\rm Sing}(S_{I,t})
+D(S_{I,t})))$ becomes log Calabi-Yau pairs discussed in \cite{Deng} 
by a simple subadjunction. Hence we can apply 
\cite[Theorem A]{Deng} to show our former assertion of 
Theorem \ref{nosncfamily}. 
\vspace{3mm}

For the latter assertion of the isotriviality of $f$, 
we first prove the cases \eqref{k3} and \eqref{hk}, 
and later the case \eqref{curve} for better comparison. 

Consider 
$\mathcal{V}:=R^{2}f_{*}f^{-1}\mathcal{O}_{\A^{1}}$ 
which 
admits 
an admissible variation of mixed hodge structure 
$(\mathcal{V},F^{\cdot}, 
W_{\cdot})$ due to 
\cite{SZ}, \cite{ElZein}, \cite{Kashiwara}, \cite[4.15]{FF}. 
For simplicity, by taking base change with respect to 
$m$-th power map $\A^{1}\to \A^{1}$ sending $t$ to $t^{m}$ for 
certain $m\in \Z_{>0}$, 
we can and do assume that the monodromy of $W$ is unipotent. 
The above admissibility is in the sense of \cite{SZ, Kashiwara}, 
which in particular asserts that 
for the Deligne canonical extension %(\cite{Del70}) 
$\overline{\mathcal{V}}$ 
of $\mathcal{V}$, we can take 
subbundles integral $\overline{W_{\cdot}}$ 
as the canonical extensions of $W_{\cdot}$ and 
and locally free subbundles of $\overline{\mathcal{V}}$ 
as $\overline{\mathcal{F}_{\cdot}}$. 
We consider the following Higgs field 
\begin{equation}\label{Higgs}
\overline{F}^{1}/\overline{F}^{2}\xrightarrow{\theta_{1}}
\overline{F}^{0}/\overline{F}^{1}\otimes \Omega_{\PP^{1}}^{1}({\rm log}([\infty])),
\end{equation}
which comes from the Gauss-Manin connection, 
or in other words, cup product with the Kodaira-Spencer 
section. To make it explicit, 
$$F^{1}/F^{2}\simeq R^{1}f_{*}\tilde{\Omega}_{\X/\A^{1}}^{1},$$
$$F^{0}/F^{1}\simeq R^{2}f_{*}\mathcal{O}_{\X},$$
where $\tilde{\Omega}_{\X/\A^{1}}^{1}:=
\Omega_{\X/\A^{1}}^{1}/\tau^{1}_{\X},$, 
$\tau^{1}_{\X}:={\rm tors}(\Omega_{\X/\A^{1}}^{1})$ (torsion subsheaf, supported on the singular locus of $\X$). 
After usual notation, we put 
$\mathcal{T}^{o}_{\X/\A^{1}}:=(\Omega_{\X/\A^{1}}^{1})^{\vee}
=(\Omega_{\X/\A^{1}}^{1} /\tau^{1})^{\vee}$. Then 
the Kodaira-Spencer class of $f$ gives rise to a section $s$ of 
$R^{1}f_{*}\mathcal{T}^{o}_{\X/\A^{1}}$ and $\theta_{1}$ is the 
cup product with $s$. 

Suppose that $f$ is not isotrivial so that $s$ is nontrivial 
by the locally trivial version of deformation theory. 
In our case, the locally free sheaf 
$F^{1}/F^{2}$ is Grothendieck dual to 
$R^{1}f_{*}\mathcal{T}^{o}_{\X/\A^{1}}$ hence the existence of 
nonzero $s$ 
asserts that $F^{1}/F^{2}$ is also not vanishing. 
On the other hand, as \cite{FF} shows for instance, 
$F^{0}/F^{1}\simeq (f_{*}\omega_{\X/\A^{1}})^{\vee}$ 
which is an invertible sheaf. 

Note that in \eqref{k3} case, 
$n=2$ assumption implies a natural inclusion 
$F^2\subset f_* \omega_{\X^{\nu}/\mathbb{A}^1}$ 
from the description via standard Hodge type spectral sequence (cf., e.g., \cite[1.5]{Fri.dss}) 
where $\X^{\nu}$ stands for the normalization of $\X$ which is smooth over $\A^1$. We can and do 
assume that $\X$ is not smooth since otherwise 
\cite[Theorem 1.4 (iii)]{VZ2} applies. Then, 
since each component of closed fibers of $\X^{\nu}$ should have nontrivial effective anticanonical divisor, 
we have $f_* \omega_{\X^{\nu}/\mathbb{A}^1}=0$ hence $\overline{F}^2=0$. 

Similarly, in \eqref{hk} case, we have 
$F^2\subset f_* \Omega_{\X^{\nu}/\A^{1}}^{2}$ 
whose right hand side vanishes by the assumption. 
Also, if $\X$ is non-isotrivial which we assume here, 
the local Torelli type assumption for \eqref{hk} implies that the 
Higgs field $F^{1}\to F^{0}/F^{1}$, induced by the 
Gauss-Manin connection, is automatically nontrivial. 

In both cases, either \eqref{k3} and \eqref{hk}, the seminegativity results \cite[1.3]{FF}
following the method of the curvature formula of Hodge metrics due to Griffiths \cite{Gri, Zucker}, 
show 
$\overline{F}^{1}=\overline{F}^{1}/\overline{F}^{2}$ and 
$\overline{F}^{0}/\overline{F}^{1}$ are both seminegative. 
This contradicts with \eqref{Higgs} since $\Omega^1_{\mathbb{P}^1}({\rm log}([\infty]))$ has negative degree ($-1$). 
This completes the proof of Theorem \ref{nosncfamily} 
cases \eqref{k3}, \eqref{hk}. 

Now we move on to the (relatively) $1$-dimensional case \eqref{curve}. 
In this case, we do not need to assume $\X/\A^{1}$ is K-trivial. 
We consider the $\mathcal{V}:=R^{1}f_{*}f^{-1}\mathcal{O}_{\A^{1}}$ 
which again 
admits 
an admissible variation of mixed hodge structure 
$(\mathcal{V},F^{\cdot}, 
W_{\cdot})$. Then similarly as above, 
$F^{1}=f_{*}(\Omega_{\X/\A^{1}}/\tau_{\X/\A^{1}})$, 
where $\tau_{\X/\A^{1}}$ denotes the torsion of 
$\Omega_{\X/\A^{1}}$. $F^{1}$ is also canonically isomorphic to 
$(f\circ \nu)_{*}\omega_{\X^{\nu}/\A^{1}}$, where 
$\nu\colon \X^{\nu}\to \X$ denotes the 
normalization of $\X$. Also, $F^{0}/F^{1}
\simeq R^{1}f_{*}\mathcal{O}_{\X}\simeq 
(f_{*}\omega_{\X/\A^{1}})^{\vee}$. 

Suppose the contrary of the assertion i.e., 
non-isotriviality of $\X$ over $\A^{1}$. Note that since 
the base $\C$ is simply connected, the combinatorial structure 
i.e., dual graph of the fibers are canonically identified to 
each other. 
Then the 
fibers of $\X^{\nu}$ with natural marking on 
$\nu^{-1}{\rm Sing}(\X/\A^{1})$ where ${\rm Sing}(\X/\A^{1})$ 
denotes the (relatively) non-smooth locus, should contain 
non-isotrivial family of marked smooth curves. 
For every rational components of the fibers of $\X^{\nu}$ over $
\A^{1}$, since any four marked points 
gives rise to a nonvanishing cross ratio, we conclude 
that $\PP^{1}$ in the normalization together with the marked points  
are (iso)trivial. 
On the other hand, for any non-rational components $Z_{t}$ of the 
fibers of $\X^{\nu}$, 
the same arguments to the above proof of cases \eqref{k3}, 
\eqref{hk} show $Z_{t}$ itself is isotrivial 
since $F^{1}$ deforms nontrivially 
in $R^{1}f_{*}f^{-1}\mathcal{O}_{\A^{1}}$ otherwise, 
to lead to the contradiction. (Or apply \cite{VZ1,VZ2}.) 
Furthermore, since the base is rational it also follows that 
the natural marked points in the 
components $Z_{t}$ are again 
(iso)trivial. Therefore, combining these, 
we again conclude that $\X$ is isotrivial and 
hence the proof of the latter assertion of 
Theorem \ref{nosncfamily} for \eqref{curve} case. 
\end{proof}

\begin{Rem}
The above proof applies rather partially for $n>2$ under ``artificial" assumptions (e.g., 
$n=3$ and all components $V_i$ satisfying $h^{2,1}(V_i)=0$ such as $\mathbb{P}^3$) 
but we omit it here as we have no particular application. 
\end{Rem}

Now we are ready to combine our results to confirm 
the desired log minimality. 
\begin{proof}[proof of Theorem \ref{log.min}]
We use \cite[1.1, 6.9]{Sva} or \cite[1.4, 9.1]{Fjn.hyp} 
to reduce the proof to the claim that 
each log canonical center of $(S,\Delta)$ does not allow any 
non-constant morphism 
from the affine line $\A^{1}$. On the other hand, 
our theorem \ref{nosncfamily} using admissible 
variation of mixed Hodge structures implies it 
by our fourth assumption 
in the Notation \ref{Notation}. This completes the proof. 
\end{proof}

\begin{Rem}
As an analogue of Theorem \ref{log.min}, 
Theorem \ref{nosncfamily} \eqref{curve} combined with 
\cite{Sva, Fjn.hyp} it follows that 
$(\overline{\mathcal{M}_{g}}^{\rm DM},
\overline{\mathcal{M}_{g}}^{\rm DM}\setminus \mathcal{M}_{g})$ 
is at least log minimal. 
Here, $\mathcal{M}_{g}$ refers to the moduli stack of 
smooth projective curve of genus $g\ge 2$ and 
$\overline{\mathcal{M}_{g}}^{\rm DM}$ refers to its 
Deligne-Mumford compactification. What we meant by log minimality at the stacky level above can be also rephrased as follows: 
if we take a coarse moduli scheme 
$\mathcal{M}_{g}\to 
M_{g}$ branches at $D_{i}$ with degree $b_{i}$, 
the above log minimality is equivalent to that of 
$(\overline{M_{g}}^{\rm DM},\sum_{i}\frac{b_{i}-1}{b_{i}}\overline{D_{i}}+(\overline{M_{g}}^{\rm DM}\setminus M_{g}))$. 
This 
partially recover \cite[Theorem 1.3]{CH} 
by a fairly different method. 
\end{Rem}

%%%%%%%%%%%%%%%%%%%%%%%%%%%%%%%%%%%%
%%%%%%%%%%%%%%%%%%%%%%%%%%%%%%%%%%%%
%%%%%%%%%%%%%%%%%%%%%%%%%%%%%%%%%%%%
%%%%%%%%%%%%%%%%%%%%%%%%%%%%%%%%%%%%
%%%%%%%%%%%%%%%%%%%%%%%%%%%%%%%%%%%%
%%%%%%%%%%%%%%%%%%%%%%%%%%%%%%%%%%%%
%%%%%%%%%%%%%%%%%%%%%%%%%%%%%%%%%%%%

\appendix
\section{Algebraic construction of Kulikov models}\label{Appendix}

\subsection{Review of history and original statements}
The following classical theorem is our topic of this appendix, 
which is indirectly related to the problem of weak K-moduli 
(see e.g., \cite{AE}). 

\begin{Thm}[{\cite[Theorem I]{Kul},\cite[Theorem]{PP}}]\label{KPP.original}
$\pi\colon \mathcal{X}\to \Delta=\{t\in \mathbb{C}
\mid |t|<1\}$ be a proper flat family of complex analytic 
surfaces such that 
\begin{enumerate}
\item general fibers $\mathcal{X}_{t}$ are smooth 
with trivial canonical line bundles 
\item central fiber $\X_{0}$ is (reduced) simple normal crossing with 
all irreducible components algebraic 
\end{enumerate}
Then there is another model $\tilde{\X}$ which coincides away from $\X_{0}$ 
which satisfies both two above conditions and further that 
$K_{\tilde{\X}/\Delta}\sim 0$. 
\end{Thm}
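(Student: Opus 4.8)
The plan is to run a relative minimal model program over $\Delta$ and then appeal to the Zariski lemma; the substance lies not in producing \emph{some} $K$-trivial model but in keeping the entire process inside the class of smooth semistable threefolds.

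First I would record the normal form of $K_{\X/\Delta}$. Hypothesis (2) means $\X$ is smooth in a neighbourhood of $\X_{0}$, which after shrinking $\Delta$ we may take to be all of $\X$; moreover, since $\X_{0}$ is reduced normal crossing the monodromy is already unipotent (Landman), so no base change is needed. Restricting to a general fibre gives $\omega_{\X_{t}}\cong\OO$, and because $\Delta^{*}=\Delta\setminus\{0\}$ is Stein with ${\rm Pic}(\Delta^{*})=0$, the natural map $\pi^{*}\pi_{*}\omega_{\X/\Delta}\to\omega_{\X/\Delta}$ is an isomorphism over $\Delta^{*}$; comparing $\omega_{\X/\Delta}$ with this trivial extension across $\X_{0}$ yields $K_{\X/\Delta}\sim\sum_{i}a_{i}V_{i}$ with $V_{i}$ the (reduced, algebraic) components of $\X_{0}$ and $a_{i}\in\Z$. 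If all $a_{i}=0$ there is nothing to do, so assume otherwise.

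Next I would run the $K_{\X/\Delta}$-MMP over $\Delta$ in the analytic category (Nakayama, Fujino; the projectivity of the $V_{i}$ makes the cone, contraction and flip theorems available, and in any case $K_{\X/\Delta}$ is $\pi$-trivial over $\Delta^{*}$, so every step is an isomorphism there and all contracted loci lie in $\X_{0}$). By termination of threefold flips this reaches a relative minimal model $\tilde{\X}\to\Delta$, isomorphic to $\X$ away from the central fibre, with $K_{\tilde{\X}/\Delta}$ $\pi$-nef and still supported on $\tilde{\X}_{0}$. The numerical part then closes by the Zariski/negativity lemma: the intersection form on the components of the connected fibre $\tilde{\X}_{0}$ is negative semidefinite with radical rationally spanned by the class of the scheme-theoretic fibre, so a $\pi$-nef divisor supported on $\tilde{\X}_{0}$ is proportional to $\tilde{\X}_{0}=\pi^{*}(0)$, which is linearly trivial; hence $K_{\tilde{\X}/\Delta}\sim_{\Q}0$, and this upgrades to $K_{\tilde{\X}/\Delta}\sim0$ once $\tilde{\X}_{0}$ is known to be reduced.

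The main obstacle — and the real point of the appendix — is that the MMP output only has $\Q$-factorial terminal singularities, whereas the theorem demands $\tilde{\X}$ smooth with reduced simple normal crossing $\tilde{\X}_{0}$; one cannot simply resolve afterwards, since a generic terminal Gorenstein (i.e.\ cDV) point is $\Q$-factorial, hence admits no small or crepant resolution, so any resolution would destroy $K$-triviality. The fix is to reconcile the abstract MMP steps with the explicit ``elementary modifications'' of Kulikov and Persson--Pinkham: starting from a smooth semistable threefold, every $K_{\X/\Delta}$-negative divisorial contraction occurring is of one of their explicit types (contracting a ruled, $\PP^{2}$-, or quadric-type component onto a curve or point of an adjacent component), after which one checks directly that smoothness and the reduced normal-crossing structure of the new central fibre persist; and every flip can be realised by an elementary modification of \emph{loc.cit} (in the simplest cases a flop of a $(-1,-1)$-curve in the double locus), again keeping the ambient threefold smooth, with any incipient terminal cDV point removed by an auxiliary flop. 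Finally, with $K_{\tilde{\X}/\Delta}\sim0$ in hand, $\tilde{\X}$ is Gorenstein with trivial dualizing sheaf, so adjunction gives $\omega_{\tilde{\X}_{0}}\cong\OO_{\tilde{\X}_{0}}$, which pins down the birational type of the central fibre, and each component $\tilde{V}_{i}$, being bimeromorphic to a projective surface, is Moishezon hence projective — the fibrewise projectivity that is the extra point beyond the classical statement. Carrying out this bookkeeping so that one never leaves the smooth semistable world is where the work is.
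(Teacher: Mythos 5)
Your overall architecture (relative MMP over the disc, the Zariski/negativity-lemma argument giving $K_{\tilde{\X}/\Delta}\sim 0$ on the minimal model, projectivity of components via Moishezon$+$smooth) matches the appendix, and that first half is fine. The gap is in the repair step, and it rests on a false premise. You assert that the terminal Gorenstein points of the relative minimal model are ``generic'' cDV points, hence $\Q$-factorial with no small or crepant resolution, and you therefore try to avoid them entirely by factoring every MMP step through Kulikov--Persson--Pinkham elementary modifications so as never to leave the smooth semistable world. But the cDV points arising on a \emph{semistable} minimal model are not generic: as Proposition~\ref{Step1} computes, their formal normal form is $xy+u\,g(z,u)=0$ with $u$ the uniformizer of the base. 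The right-hand side $u\,g(z,u)$ is visibly reducible, so these points are (formally) non-$\Q$-factorial and always admit small \emph{crepant} resolutions --- not as schemes, but as algebraic (or analytic) spaces, obtained by blowing up the ideals $(x,g_{i})$ formally and algebraizing via Artin. This is exactly the mechanism of the appendix, and it makes your bookkeeping unnecessary. That bookkeeping is, moreover, not actually carried out: a $K$-negative divisorial contraction of a component of the central fibre onto a curve or point of a neighbouring component does \emph{not} in general keep the total space smooth (that is precisely how the cDV points appear), and no argument is given that every flip is a $(-1,-1)$-flop realizable by an elementary modification. You flag this as ``where the work is,'' which is to say the proof is not given.

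You also omit the last step. Even after the small resolution one does not yet have a Kulikov model: Proposition~\ref{Step1} only reaches a model whose fibres are either normal crossing near a regular point of the total space or carry $A$-type rational double points along $u=0$. One must then invoke Brieskorn--Tyurina simultaneous resolution (Lemma~\ref{Step2}), again in the category of analytic or algebraic spaces, and verify that no base change is required --- which holds here because the monodromy is unipotent (Landman, since $\X_{0}$ is reduced normal crossing) and of finite order, hence trivial. Finally, be aware that even the paper claims only a \emph{partial} alternative proof of the analytic statement, under the extra hypothesis that $\X\to\Delta$ is projective away from the singular fibre, because a relative K\"ahler/semistable MMP over a disc is not available in the required generality; your appeal to an analytic relative MMP ``(Nakayama, Fujino)'' glosses over the same missing ingredient.
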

Recall their construction was highly topological with respect to 
complex analytic topology. 
In \cite[Theorem I]{Kul}, an additional assumption that the 
family is projective is put but 
the structures of the degenerations are also determined. 
%(see also \cite[I, Appendix]{Fri}). 
Such determination of degeneration types is also similarly done in 
arithmetic setting (cf., e.g., \cite[3.4]{Nakk}). 

The purpose here is to recover and generalize  
the above model construction, 
by means of {\it the minimal model program}, 
which originally emerged shortly after 
\cite{Kul, PP} 
and much developped during these several decades. 

Hence the idea is fairly natural and simple. 
The author once believed such arguments had been naturally 
expected as folklore 
or perhaps even known to 
some experts of birational geometry as he indeed felt during 
several conversations with experts. 
Nevertheless, the only literature the author 
has been able to find so far is the nice  series of works by 
Yuya Matsumoto and Christian Liedke in a more 
arithmetic context. See the proofs of 
\cite[3.1]{Mat}, \cite[3.1]{LM}, 
which we review at \ref{mix} later. Their papers 
\cite{Mat, LM} indeed went beyond; 
establishing good reduction criteria for arithmetic K3 surfaces 
as in abelian varieties case, rather than just 
(re)constructing Kulikov model in general. 
This notes only mean to fill the apparent 
lack of (more) complete reference 
on this matter, as a slight refined version
 disallowing a base change. 
See Theorem \ref{KPP.geom} below and their proofs 
for the details. 
\vspace{5mm}

\subsection{Proof of algebraic case}
We first (re)construct the Kulikov model via the minimal model program,  
with the base field an algebraically closed field $k$ of 
{\it any} characteristics. 

\begin{Thm}[Kulikov model - algebraic reconstruction]\label{KPP.geom}
Suppose the base field $k$ is algebraically closed. Take 
any strictly semistable 
 proper scheme of dimension $3$ 
over a smooth $k$-curve $C$, denoted as 
$\pi \colon \X\to C$, 
whose general fibers have trivial canonical divisors. 
Then, there is a birational map from an 
algebraic space 
$\tilde{\X}\dashrightarrow \X$ such that 
\begin{enumerate}
\item 
strictly semistable 
\item 
$K_{\tilde{\X}/C}\sim_{C} 0$. 
\item \label{nn.alg}
all closed fibers of $\tilde{\X}\to C$ are projective algebraic. 
\end{enumerate}
\end{Thm}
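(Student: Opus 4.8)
The plan is to replace the topological ``elementary modifications'' of Kulikov and Persson--Pinkham \cite{Kul, PP} by the relative minimal model program over $C$, following the arithmetic treatments \cite[3.1]{Mat}, \cite[3.1]{LM}. Since $C$ is a smooth curve and $\pi$ is strictly semistable, $\X$ is a smooth proper threefold and $K_{\X/C}$ is $\pi$-vertical and numerically trivial on every smooth fibre. First I would run the $K_{\X/C}$-MMP over $C$; this is available for threefolds over a curve in characteristic zero, and also in positive characteristic provided the residue characteristic is sufficiently large, together with resolution of threefold singularities. It terminates with a birational contraction $\X\dashrightarrow\X'$ which is an isomorphism over the open locus where $\pi$ is smooth (all $K_{\X/C}$-negative curves lie in the special fibres), such that $\pi'\colon\X'\to C$ has terminal singularities, $K_{\X'/C}$ is $\pi'$-nef, and the general fibres are unchanged; each step contracts a special-fibre component or is a flip or flop.

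Next I would upgrade ``nef'' to ``trivial''. A $\pi'$-vertical $\Q$-Cartier divisor that is $\pi'$-nef and numerically trivial on the generic fibre is $\equiv_{C}0$, by a Zariski-lemma type argument exploiting the negative semi-definiteness of the intersection pairing on the components of a fibre; hence $K_{\X'/C}\equiv_{C}0$. Abundance for the surface fibres then gives $K_{\X'/C}\sim_{\Q,C}0$, and since a general fibre is a K3 or an abelian surface --- so has torsion-free N\'{e}ron--Severi group --- the sheaf $\pi'_{*}\omega_{\X'/C}$ is invertible on $C$ and the evaluation map yields $\omega_{\X'/C}\cong\pi'^{*}L$ with $L\in\mathrm{Pic}(C)$, i.e.\ $K_{\X'/C}\sim_{C}0$. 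In particular $\omega_{\X'/C}$ is a line bundle, so $\X'$ is Gorenstein; being moreover terminal it has only isolated compound Du Val (cDV) singularities, and every closed fibre of $\pi'$ is a Gorenstein surface with trivial dualizing sheaf.

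The model $\X'$ need not be strictly semistable, and the genuinely delicate point --- the reason for the ``no base change'' refinement --- is to pass from $\X'$ to a strictly semistable model $\tilde\X$ while keeping $K_{\tilde\X/C}\sim_{C}0$. A naive resolution will not do, since the cDV points need not admit a crepant resolution and a non-crepant resolution would make $K_{\tilde\X/C}$ effective and nonzero. The way out is that, because $\X\to C$ was already semistable, the local monodromies at the critical values are unipotent (Landman, \cite{Landman}), and for a K-trivial surface degeneration with unipotent monodromy one can reach a strictly semistable relatively $K$-trivial model over the \emph{same} base $C$ by a suitable combination of blow-ups and blow-downs --- this is exactly the content of Theorem \ref{KPP.original}, which I would re-derive algebraically in the now relatively $K$-trivial situation by a resolution followed by a further (guided) MMP, as in \cite[3.1]{Mat}, \cite[3.1]{LM}. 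I expect the first two paragraphs to be essentially routine given the cited machinery, and this step to be the main obstacle. One should also note that $\tilde\X$ is in general only an algebraic space, not a projective scheme, since the flops and the semistabilizing modifications can destroy global projectivity of the total space --- already so analytically in \cite{Kul, PP}.

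Finally I would verify \ref{nn.alg}, the projectivity of the closed fibres. For general $t\in C$, $\tilde\X_{t}\cong\X_{t}$ is a smooth proper surface, hence projective; for a critical $t$, $\tilde\X_{t}$ is a reduced simple normal crossing surface whose irreducible components are smooth proper (hence projective) surfaces, and projectivity of the whole fibre then follows from the classification of the possible degenerate fibres into Kulikov Types I, II and III, each of which is manifestly projective.
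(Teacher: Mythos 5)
Your overall strategy --- replace the topological elementary modifications by the relative MMP over $C$, as in \cite{Mat, LM} --- is the paper's strategy, and your first two paragraphs (run the semistable MMP, get a relative minimal model with terminal Gorenstein singularities and $K\sim_{C}0$) agree with the paper's first step. But there is a genuine gap exactly where you locate ``the main obstacle'': the passage from the minimal model $\X'$ to a strictly semistable model without base change. Your plan there is circular --- you propose to ``re-derive Theorem \ref{KPP.original} algebraically \ldots by a resolution followed by a further (guided) MMP'', but Theorem \ref{KPP.original} is precisely the statement being reproved, and no further MMP is available once $K_{\X'/C}\sim_{C}0$ (there is nothing left to contract). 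The paper's actual mechanism has two concrete ingredients you never supply. First, a local analysis (Proposition \ref{Step1}): using semi-dlt adjunction to the central fiber one shows every singular point of $\X_{\min}$ is a hypersurface singularity with formal normal form $xy+ug(z,u)=0$ (these are non-$\Q$-factorial in general, which is why a naive MMP bookkeeping fails); one then performs \emph{small} formal blow-ups along the ideals $(x,g_{i})$ for the irreducible factors $g_{i}$ of $ug$, and algebraizes via Artin's theorem on modifications to obtain an algebraic space $\Y\to\X_{\min}$ whose closed fibers have only normal crossings and rational double points. Second (Lemma \ref{Step2}), one applies Brieskorn--Slodowy--Shepherd-Barron \emph{simultaneous resolution} to the fiberwise RDPs; the existence of the original strictly semistable model forces the Galois/monodromy action on $H^{2}_{\text{\'et}}$ to be both unipotent and of finite order, hence trivial, which is what lets the simultaneous resolution exist over $C$ itself, with no base change, in the category of algebraic spaces. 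You correctly invoke unipotency of the monodromy (Landman) but never bring in simultaneous resolution, which is the tool that actually closes the argument.

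A secondary error: your justification of fiberwise projectivity via ``the classification into Kulikov Types I, II and III, each of which is manifestly projective'' does not work --- the type classification is combinatorial and does not imply projectivity of the snc fiber (a proper surface with projective components need not be projective), and the paper explicitly presents fiberwise projectivity as an \emph{improvement} over the component-wise algebraicity of \cite{Kul, PP}. In the paper it comes for free from the construction: the fibers of $\tilde\X$ are obtained from the projective fibers of the projective minimal model $\X_{\min}\to C$ by small modification and minimal resolution of RDPs, hence remain projective even though the total space $\tilde\X$ is only an algebraic space.
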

\noindent 
Note that the above statements do 
{\it not} involve base change of $C$. 
Also note that the point \eqref{nn.alg}
 gives slight improvements of the component-wise algebraicity 
 in original \ref{KPP.original}. 
 For extensions over more general base with similar arguments, 
 we refer to \S \ref{generalbase}. 
 
\begin{proof}[proof of Theorem \ref{KPP.geom}]

First we replace $\X$ by its blow up which is projective over $C$ and its further projecive log resolution 
so that one can assume $\X$ is projective over $C$. Then we replace $\X$ by its minimal model by either 
\cite{Fjn.ss} (for characteristic $0$) or \cite{HX, HW} (for positive characteristics). Our proof depends on analysis of the 
possible singularities on a relative minimal model of $\X$ over $C$, 
which we denote $\pi_{\min}\colon \X_{\rm min}\to C$. 
Our first step is as follows, which is not discussed in 
\cite{kwmt, Mat, LM} because \cite{kwmt} assumes  
a weaker version of $\Q$-factoriality which excludes most subtle singularities. 

In this notes, dlt minimal model $\pi_{\min}\colon \mathcal{X}_{\rm min}\to C$  only means that $(\X_{\min},\X_{\min,0})$ is dlt, 
$K_{\X_{\min}}(+\X_{\min,0})\sim_{C,\Q} 0$, 
without $\Q$-factoriality for generalization 
(e.g. to allow all nodes). The next proposition is a first step, 
which also aims at understanding the local strutures of all 
non-necessarily $\Q$-factorial dlt models. 

\begin{Prop}[Small resolution as algebraic space]
\label{Step1}
For any dlt minimal model 
$\pi_{\min}\colon \mathcal{X}_{\rm min}\to C$, 
%whose 
%generic fiber is smooth and has trivial canonical line bundle, 
%we have the following modification: 
there is a small resolution from relatively proper (over $C$) 
algebraic space 
$\mathcal{Y}\to \mathcal{X}_{\rm min}$ such that any closed 
point $p\in \Y$ satisfies one of the followings. 
Here, the composite $\Y\to C$ is denoted as $\pi_{\Y}$. 
\begin{enumerate}
\item \label{snc} $\Y$ is regular at $p$ and the fiber 
$\pi_{\Y}^{-1}(p)$ is normal crossing around $p$ or 
\item \label{rdp} $\pi_{\Y}^{-1}(p)\ni p$ is a rational double point. 
\end{enumerate}
\end{Prop}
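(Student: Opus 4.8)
The plan is to read off the local analytic (equivalently, formal- or \'etale-local) structure of $\X_{\rm min}$ along the central fibre $\X_{{\rm min},0}$ and then to resolve, one blow-up of a component of $\X_{{\rm min},0}$ at a time, precisely those singularities that obstruct conclusion \eqref{snc}. The key is that the dlt/$K$-triviality hypotheses pin the ambient threefold down to compound Du Val, and that the remaining ``bad'' configurations are exactly the non-$\Q$-factorial ones, which component blow-ups handle.

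First I would record the global shape of a dlt minimal model. Semistability of the central fibre is preserved by the relative MMP (\cite{Fjn.ss} in characteristic $0$, \cite{HX, HW} in positive characteristic), so $\X_{{\rm min},0}$ is a reduced Cartier divisor; since $\X_{{\rm min},0}=\pi_{\min}^{*}(0)\sim_{C}0$, the relation $K_{\X_{\rm min}}(+\X_{{\rm min},0})\sim_{C,\Q}0$ together with triviality of the canonical bundle of a general fibre yields $K_{\X_{\rm min}/C}\sim_{C}0$, hence $\X_{\rm min}$ is Gorenstein; being moreover klt (a dlt pair has klt underlying space), it is Gorenstein canonical, so by the classification of Gorenstein canonical threefold singularities (Reid) each of its singular points is compound Du Val, i.e.\ a general hyperplane section through it is a rational double point. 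By inversion of adjunction $\X_{{\rm min},0}$ is semi-dlt with trivial dualizing sheaf, and each of its irreducible components is normal.

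Next I would extract the local dichotomy. Fix $p\in\X_{{\rm min},0}$. If $\X_{\rm min}$ is smooth at $p$, then since $\X_{{\rm min},0}$ is reduced and $(\X_{\rm min},\X_{{\rm min},0})$ is log canonical, a log canonical threshold computation forces $\X_{{\rm min},0}$ to be analytically normal crossing at $p$: this is already case \eqref{snc}, with $\mathcal{Y}=\X_{\rm min}$ locally. If $\X_{\rm min}$ is singular at $p$ but $\X_{{\rm min},0}$ is analytically irreducible near $p$, then since $\X_{{\rm min},0}$ is Cartier, adjunction makes $\X_{{\rm min},0}$ a normal klt surface germ with trivial dualizing sheaf, hence a rational double point: this is case \eqref{rdp}, again with $\mathcal{Y}=\X_{\rm min}$ locally. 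Thus the only points requiring surgery are those where $\X_{\rm min}$ is singular \emph{and} two or three components of $\X_{{\rm min},0}$ meet through $p$; these are exactly the non-$\Q$-factorial configurations (the $3$-fold ordinary double point $xy=zw$ and its close relatives along double curves, and a short list of degenerate triple points) excluded by the $\Q$-factoriality hypothesis of \cite{kwmt} and hence absent from \cite{Mat, LM}. I would enumerate these finitely many local models from the compound Du Val condition together with the constraint that $\X_{{\rm min},0}$ be reduced, Cartier and normal crossing off $\mathrm{Sing}\,\X_{\rm min}$, and check that blowing up one of the irreducible components of $\X_{{\rm min},0}$ through $p$ is a \emph{small} crepant modification of $\X_{\rm min}$ (it extracts only $\mathbb{P}^1$'s over the non-$\Q$-Cartier locus and leaves $K$ unchanged) after which $p$ is replaced by points of types \eqref{snc}/\eqref{rdp} or by a strictly smaller collection of such bad strata. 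This is the algebro-geometric incarnation of Kulikov's elementary modifications \cite{Kul, PP}. Iterating over the finitely many components of $\X_{{\rm min},0}$, with termination controlled by a monotone invariant (number of bad strata, or a length along the double curves), produces the desired $\mathcal{Y}$; the individual small modifications need not be projective over $C$ (the Atiyah-flop / Moishezon phenomenon), so $\mathcal{Y}$ is obtained only in the category of algebraic spaces, while properness over $C$ is preserved at each step.

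I expect the genuine content --- and the part not covered by \cite{kwmt, Mat, LM} --- to be the honest local classification in the non-$\Q$-factorial dlt case and the verification that a component blow-up is always small, crepant and strictly simplifies the configuration there; this local bookkeeping is the main obstacle, since once it is in place the global statement follows by Noetherian induction. A secondary technical point is to glue the local small modifications into a single algebraic space proper over all of $C$, which is precisely why one is forced out of the scheme category.
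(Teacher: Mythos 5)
Your reduction to the case where $\X_{\rm min}$ is singular and the fibre is reducible at $p$ agrees with the paper (Gorenstein by semicontinuity plus adjunction, cDV by Reid--Mori, the smooth-point and irreducible-germ cases handled exactly as you say, and the observation that the remaining configurations are the non-$\Q$-factorial ones excluded in \cite{kwmt}). But the step you defer --- ``I would enumerate these finitely many local models \dots and check that blowing up one of the irreducible components \dots is small, crepant and strictly simplifies the configuration'' --- is precisely the content of the paper's proof, and the one concrete claim you make about it is not right: there is \emph{no} finite list of local models. The paper shows, via plt adjunction on the two components $V_1,V_2$ (the index-one cover argument forcing $n=1$, hence both components smooth at $p$) and a sequence of coordinate changes, that the germ is the hypersurface
$$xy+u\,g(z,u)=0$$
with $g$ an \emph{arbitrary} non-unit formal power series in $z,u$. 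Your enumeration strategy has to be replaced by this normal-form computation. Relatedly, the paper rules out the singular triple-point case entirely (the semi-dlt condition on $\X_{{\rm min},0}$ forces local strict normal crossings there, so the total space is already regular); your ``short list of degenerate triple points'' does not occur, and only the two-component case needs surgery.

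The second gap is the mechanism of the surgery and why one lands in algebraic spaces. The paper does not blow up components of the central fibre: it factors $u\,g(z,u)=\prod_i g_i$ into irreducible \emph{formal} power series and performs iterated formal blow-ups of the ideals $(x,g_i)$, each step splitting off a regular chart and leaving $xy=g_i(z,u)$, which restricts to an $A$-type rational double point on $u=0$; since the $g_i$ are only formally defined, the resulting formal modification must be algebraized by Artin's theorem \cite{Art.modif}, and this is exactly where the algebraic-space category enters. Your alternative of blowing up global components of $\X_{{\rm min},0}$ and inducting on a monotone invariant is plausible (on the $u$-chart of the blow-up of $(x,u)$ the equation becomes $x'y+g(z,u)=0$, and the order of $u$ in $g$ drops, so the order of vanishing of $g$ along $u=0$ is a workable termination invariant), and it would have the advantage of staying projective over $C$ at each step; but you have not carried out this local computation, nor explained why, on your route, the output fails to be a scheme --- as written the proposal asserts the conclusion of the hard local analysis without supplying it.
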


\begin{proof}[proof of Proposition \ref{Step1}]

We first analyze the local structure of $\mathcal{X}_{\rm min}$. 
While we make some arguments partially self-contained, 
they are essentially in \cite{Reid}, 
\cite{Mori} as 
their special cases. 
Since singular fibers lie over only finite closed points in 
$C$, we can focus on one of them which we denote as $0\in C$. 
We denote its fiber as $\X_{0}$. 

If one takes the germ of any (but automatically terminal) 
singularity $p\in 
\mathcal{X}$, $\mathcal{X}_{\min,0}$ is Gorenstein by the 
upper semicontinuity of ${\rm dim}(H^{0}
(\omega_{\mathcal{X}_{\min}/C}|_{\X_{\min,t}}))$ 
(cf., \cite{Fjn.ss}). Therefore, from the adjunction of 
$\mathcal{X}_{\min}$ 
to $\mathcal{X}_{\min,0}$, 
it also follows that $\mathcal{X_{\min}}\ni p$ is also Gorenstein,
not only $\mathbb{Q}$-Gorenstein. Also note that 
$\mathcal{X}_{\min,0}$ is semi-dlt in the sense of 
\cite[5.19]{Kol13} which is stronger than that of 
\cite[1.1, 1.2]{FjnM}. 

Its general hyperplane passing through $p$ obtains DuVal singularity 
at $p$, by tha classics of M.Reid, S.Mori in 
characteristics $0$ (cf., e.g., \cite{Reid}, \cite{Mori}, 
\cite[\S 5.3]{KM}) which also holds in positive characteristics 
due to recent \cite{ST}. 
Hence $p\in \X_{\min}$ itself is locally a hypersurface singularity, 
so that we naturally wish to analyze 
the local equation in the completion of 
$\mathcal{O}_{x,\X_{\min}}$. 

We take a uniformizer of $0\in C$ as $u$, 
which we can regard as an element of $\mathcal{O}_{x,\X_{\min}}$ 
or its completion $\widehat{\mathcal{O}_{x,\X_{\min}}}$
via $\pi_{\min}$. 

In our situation, we know that $p\in \X_{\min}$ 
is Gorenstein. If 
$p\in (u=0)\cap \X$ passed through by three 
smooth irreducible components, 
then from sdlt condition it is locally 
strictly simple normal crossing thus in the situation of 
\eqref{snc}. Also, if $p\in (u=0)\cap \X$ is normal 
then it is in the situation of \eqref{rdp}. 
If $p\in \pi_{\Y}^{-1}(0)$ is an irreducible germ, 
it is klt and Gorenstein hence rational double point i.e., 
in the situation of \eqref{rdp}. 

Suppose otherwise - there would be exactly two irreducible components 
$V_{i}\ni p (i=1,2)$ 
passing through $p\in (u=0)\cap \X$, with the double locus 
$D=V_{1}\cap V_{2}$. Then, the germ $(V_{i},D)$ is plt, 
hence its formal germ is same as the cyclic quotient singularity 
$\frac{1}{n}(1,r)$ in $\mathbb{A}^{2}_{x,y}$ 
where $D$ is the vanishing locus of the 
first coordinate $x$ and ${\rm gcd}(r,n)=1$ 
(cf., e.g., \cite[3.31, 3.32]{Kol13}). 
A direct self-contained explanation to confirm it is 
to take the index $1$ covering of $V_{1}$, 
which is a cyclic covering, to make 
the pullback of $V_{1}\cap V_{2}$ Cartier and regular again. 
Thus the cyclic cover itself is smooth and we observe the above 
description. 

As the Gorensteinness of $\X_{\min,0}$ implies, 
${\rm Res}_{(x=0)}(\frac{dx}{x}\wedge dy)|_{y=0}=dy|_{y=0}$ 
needs to be $\mu_{n}$-invariant, hence $n=1$ so that 
$V_{i}\ni p (i=1,2)$ are 
both smooth at $p$. 
This implies that 
\begin{align}\label{modu}
\widehat{\mathcal{O}_{p,\X_{\min}}}/(u)
\simeq k[[x,y,z]]/(xy). 
\end{align}
Since the embedded dimension of $p\in \X_{\min}$ is $4$, 
$u$ gives a regular element of $\widehat{\mathcal{O}_{p,\X_{\min}}}$ 
and any lift of $x,y,z$ denoted by the same letters 
complements the system of parameters of 
$\mathcal{O}_{p,\X_{\min}}$ 
as $x,y,z,u$. Hence, there is a $F\in k[[x,y,z,u]]$ such that 
%(see e.g. \cite[\S28, \S29]{Matsumura})
\begin{align}\label{completion.singularity}
\widehat{\mathcal{O}_{p,\X_{\min}}}
\simeq k[[x,y,z,u]]/F(x,y,z,u). 
\end{align}
From \eqref{modu}, 
\begin{align}
F(x,y,z,u)=xy+uf(x,y,z,u).
\end{align}
%Before going to further analysis, we make a remark 
%which eliminates the possibility in $\Q$-factorial case. 
%\begin{Claim}
%If $\X_{\min}$ is $\Q$-factorial, this type of singularities 
%do not exist. 
%\end{Claim}
%\begin{proof}
%Indeed, if $mV_{1}$ is Cartier divisor for some 
%$m\in \mathbb{Z}_{>0}$, it means 
%$(x,u)^{m}$ is principal 
%in the completion 
%\eqref{completion.singularity}. Thus, there are formal power series 
%$h(x,y,z,u)$, $r_{x}(x,y,z,u)$, $r_{u}(x,y,z,u)$, 
%$q_{x}(x,y,z,u)$, and 
%$q_{u}(x,y,z,u)$ such that 
%$$x^{m}=r_{x}(x,y,z,u)h(x,y,z,u)+(xy+uf(x,y,z,u))q_{x}(x,y,z,u),$$
%$$u^{m}=r_{u}(x,y,z,u)h(x,y,z,u)+(xy+uf(x,y,z,u))q_{u}(x,y,z,u).$$

%\end{proof}
We note that the above type singularity is (even algebraically) non-$\Q$-factorial, 
as it follows when we consider the blow up of the prime divisor which is formally locally 
$(x,u)$ in the above. 
Hence, in $\Q$-factorial assumption, 
we can shorten the following arguments. 
Note in particular that, in the case $\X_{\min}$ is obtained by 
running relative MMP over $C$ from regular semistable model $\Y$, 
it is $\Q$-factorial (see \cite[3.18, 3.37]{KM} whose 
proofs work over any field). 

We continue the analysis of the above type (non-$\Q$-factorial) singularity, 
If $f\equiv aux+buy {\rm (mod.}\texttt{m}_{x,\X_{\min}}^{2})$ 
for some $a, b\in k$, by replacing $x,y$ by 
$x+bu, y+au$ (which we still denote by the same letters, 
avoiding complications) the equation becomes 
\begin{align}\label{eqn}
xy+uxh_{x}+uyh_{y}+ug(z,u),
\end{align}
where $h_{x}, h_{y}, g \in k[[x,y,z,u]]$ 
and $g$ involves the variables only $z, u$. 
By further replacing $x,y$ by $x+uh_{y}, y+uh_{x}$, 
we can and do assume $h_{x}=h_{y}=0$. 
Hence, we obtain a normal form of $F$ as 
\begin{align}
F(x,y,z,u)=xy+ug(z,u),
\end{align}
where $g(z,u)$ does not contain non-zero constant i.e., 
not unit. 

Note that this is essentially proven as a special case in 
\cite[Theorem 12 (also cf., Theorem 3, Corollary 4)]{Mori}. 
%In \cite{Mori} the arguments partially rely on approximation argument 
%\cite[5.11]{Art} \cite{Tou}) which we follow. 
Also note that without Gorenstein property, a priori 
it would possibly have been 
$\mu_{m}$-quotient of the above equation in general but 
comparing with \cite[6.8(vi)]{KSB} 
anyhow implies $m=1$. Now we decompose $ug(u,z)\in k[[x,y]]$ into the 
finite product of irreducible formal power series $g_{i}(u,z)$s for 
$i=1,\cdots,l$. We can and do suppose $g_{1}=u$. 

Now we consider the formal blow up of the formal scheme 
$${\rm Spf}(\widehat{\mathcal{O}_{p,\X_{\min}}}
\simeq k[[x,y,z,u]]/F(x,y,z,u))$$ along the ideal sheaf $(x,g_{i})$. 
Clearly, it is covered by 
a regular open subset and 
$${\rm Spf}(k[[x,y,z,u]]/\frac{F(x,y,z,u)}{u}),$$ 
where 
$\frac{F(x,y,z,u)}{u}=g(z,u).$ 
Repeating the same procedure for $g_{i}$s inductively, 
we obtain a formal $k$-scheme $\widehat{\tilde{\X}}$ 
properly mapping to 
${\rm Spf}(\widehat{\mathcal{O}_{p,\X_{\min}}})$ 
whose singularities are all formally isomorphic to 
${\rm Spf}(k[[x,y,z,u]]/xy-g_{i}(z,u))$, 
where one of the formal coordinate $x$ is replaced during 
each formal blow up. 
Note that ${\rm Spf}(k[[x,y,z,u]]/xy-g_{i}(z,u))$ is 
either smooth or the completion of isolated singularity at 
$(x,y,z,u)=(0,0,0,0)$ which restricts to A type rational double 
points along $u=0$ hyperplane: hence satisfying the desired 
property of Proposition \ref{Step1} at the {\it formal (and local)  level}. We do the same procedures at all $p\in {\rm Sing}(\X_{\min})$. 

Finally, using \cite[Theorem 3.2, \S5]{Art.modif}, 
\footnote{this allows any field or excellent Dedekind scheme as a base}
there is 
an algebraic $k$-space $\tilde{\X}\to \X$ which 
gives rise to the above $\widehat{\tilde{\X}}\to {\rm Spf}(\widehat{\mathcal{O}_{p,\X_{\min}}})$ for any $p\in \X_{\min}$. 
We complete the proof of Proposition \ref{Step1}. 
\end{proof}

\begin{Rem}\label{Qfac}
The above type \eqref{eqn} 
terminal singularity, with possibly nonunit $g(z,u)$ 
are not contained in the classification list of the 
singularities \cite[4.4]{kwmt} due to the 
assumption in {\it loc.cit} 1.1 (3) which excludes it. 
Indeed, 
if you take \'etale presentation of such algebraic space, 
and pull back the ideal we blow up, we observe that 
above type singularity violates the condition 1.1(3) of \cite{kwmt} 
(or only the $\Q$-Cartierness of $V_{i}$s in our setting). 
This arguments adds another 
explaination to the proof of \cite[4.4]{kwmt}. 
%(By the way, we remind that 
%\cite[4.8]{Maulik}, 
%\cite[\S 3]{Mat}, \cite[Proposition 
%3.1]{LM} depend on the classification.)
\end{Rem}

\begin{comment}
\begin{Rem}\label{missing}
We also just remark that $\X'''$ in \cite[3.1]{Mat} 
is often {\it not} regular nor terminal a priori 
(at double locus in the central fiber) 
due to the effect of base change. 
Proof of \cite[3.1]{LM} has the same minor inaccuracy. Indeed, along 
the double curve one obtains non-isolated cA type singularity 
and even worse at the point where three components intersect 
a priori, 
even if the concern of \cite{Mat, LM} is for potentially good reduction 
case. 
The author believes the papers work after small modifications. 
Indeed, even if the irregularity of the total space violates the discussion, 
one can take toroidal (log) crepant 
blow ups. Hence, only 
just a little additional arguments seem necessary a priori. 
We refer the interested readers to \cite{FS} or more general \cite[2.8]
{galaxy} for instance, 
which explain how to (partially) resolve it in a (log) crepant manner 
as corresponding to ``finer and finer subdivision'' of 
dual graph. 
\marginpar{(暫定的に半ば個人的メッセージを書いただけ)}

For the purpose of their papers, 
either by employing the same arguments as Theorem \ref{KPP.geom} 
or do toroidal (log) crepant resolution as in 
\cite{FS, galaxy} should resolve the problem I believe. 
\end{Rem}
\end{comment}

Now we come back to continue the proof of Theorem \ref{KPP.geom}. 
Our second step is the following, a wellknown 
procedure due to \cite{Brieskorn, Slodowy} 
(see also recent \cite{SB.simul0, SB.simul, Mat, LM}). 
We write here just for clarity and self-containedness. 

\begin{Lem}[Simultaneous resolution]\label{Step2}
Consider any proper algebraic space 
$\pi_{\Y}\colon \mathcal{Y}\to C$ over a smooth $k$-curve $C$ whose 
generic fiber is smooth with trivial canonical divisor, 
such that any closed 
point $y\in \Y$ satisfies one of the followings: 
\begin{enumerate}
\item $\Y$ is regular at $y$ and the fiber 
$\pi_{\Y}^{-1}(\pi_{\Y}(y))$ is normal crossing around $y$ or 
\item \label{rdp.fib} $\pi_{\Y}^{-1}(\pi_{\Y}(y)) \ni y$ is a rational double point. 
\end{enumerate}
Further, suppose that there is a birational 
strictly semistable model $\X\to C$ with the same generic fiber 
as that of $\Y\to C$. 

Then there is a simultaneous (small) resolution 
$\tilde{\X}\to \Y$ in the category of algebraic spaces. 
\end{Lem}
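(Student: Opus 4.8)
The plan is to localise the problem near the points of type \eqref{rdp.fib}, where $\Y$ carries a family of rational double point surfaces, and to apply the Brieskorn--Tyurina simultaneous resolution; the one delicate point is that we insist on \emph{not} base-changing $C$, and this is precisely where the hypothesis that $\X\to C$ is strictly semistable with the same generic fibre is used. Over the locus of the other type, $\Y$ is already regular with normal crossing fibres and nothing needs to be done, so I would fix a point $y$ over $0\in C$ at which $\pi_{\Y}^{-1}(0)$ has a rational double point of some $ADE$ type $R$ and work in the henselisation there.

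First I would recall the classical deformation theory of rational double points (Brieskorn, Tyurina; see \cite{Brieskorn, Slodowy}, with the arbitrary-characteristic refinements in \cite{SB.simul0, SB.simul, Mat, LM}): the henselisation of $\Y\to C$ at $y$ is pulled back from the semi-universal deformation $\X_{R}\to B_{R}$ of the rational double point of type $R$ along a classifying map from the henselisation of $(C,0)$, and $\X_{R}$ acquires a simultaneous resolution after the finite base change by the Weyl cover $\widetilde{B_{R}}\to B_{R}=\widetilde{B_{R}}/W(R)$. Hence a simultaneous resolution over $C$ itself, with no base change, exists \'etale-locally around $y$ if and only if the classifying map lifts through the Weyl cover, equivalently if and only if the monodromy on the configuration of exceptional $(-2)$-curves, which a priori lands in $W(R)$, is trivial. (In the concrete local normal form $xy=g_{i}(z,u)$ produced by Proposition \ref{Step1}, with $g_{i}$ irreducible and $u$ a uniformiser, this monodromy is just the action on the roots of $g_{i}$ in the variable $z$, and the criterion can alternatively be verified by hand via iterated blow-ups.)

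The main step, which I expect to be the real obstacle, is the vanishing of this monodromy. The exceptional $(-2)$-classes span a monodromy-stable sublattice $R\hookrightarrow H^{2}$ of the (torsion-free) cohomology of the generic fibre --- or the corresponding sub-$\Z_{\ell}$-module of $\ell$-adic cohomology for $\ell$ prime to the characteristic --- on which the monodromy acts through $W(R)$ by Picard--Lefschetz. Since $\Y\to C$ and the strictly semistable $\X\to C$ have isomorphic generic fibres, they are isomorphic over a punctured neighbourhood of $0$, so the monodromy of the family on $H^{2}$ of the generic fibre is \emph{unipotent}, a strictly semistable family having unipotent local monodromy (cf. \cite{Landman}). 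Its restriction to $R$ is therefore unipotent; being at the same time of finite order, since it lies in the finite group $W(R)$, and acting on a torsion-free module, it must be the identity. Without the semistability hypothesis one would in general only be able to lift the classifying map after a base change of $C$ along the Weyl cover, so this is exactly where that hypothesis is indispensable.

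Finally I would globalise. Brieskorn's local simultaneous resolutions are in general not schemes --- which is exactly why the statement is phrased in the category of algebraic spaces --- so I would glue them, together with the identity over the locus of the other type, by Artin's theorem on modifications of algebraic spaces \cite[Theorem 3.2]{Art.modif}, exactly as in the proof of Proposition \ref{Step1}, obtaining a relatively proper algebraic space $\tilde{\X}\to\Y$ over $C$. By construction $\tilde{\X}\to\Y$ is an isomorphism off the codimension-$2$ fibrewise $(-2)$-curve configurations, hence small and crepant; $\tilde{\X}$ is regular; and the closed fibres, obtained from normal crossing surfaces by minimally resolving rational double points, are again reduced normal crossing, so $\tilde{\X}\to C$ is strictly semistable. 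This gives the desired simultaneous resolution.
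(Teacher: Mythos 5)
Your proof is correct and follows essentially the same route as the paper's: the strict semistability of $\X$ forces the local monodromy to be unipotent, while its action on the vanishing $(-2)$-classes factors through the finite Weyl group, so it is trivial and Brieskorn's simultaneous resolution applies without base change in the category of algebraic spaces. The only cosmetic difference is that in positive characteristic the unipotency should be quoted from Rapoport--Zink \cite{RZ} for the Galois action on $\ell$-adic cohomology rather than from \cite{Landman}, which is exactly what the paper does.
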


\begin{proof}[proof of Lemma \ref{Step2}]
Fix a rational double point $y$ of fiber as \eqref{rdp.fib}, 
take its neighborhood $U\subset \Y$ and 
denotes its complement as $Z$. 

The strict semistability of 
$\X^{o}:=\X \setminus \overline{(Z\setminus \pi_{\Y}^{-1}(y))}$ 
implies that action of 
the Galois group ${\rm Gal}(\overline{K}^{sep}/K)$ 
\footnote{not only the inertia group around $\pi_{\Y}(y)$. 
By the way, this difference becomes crucial at least 
over general CDVR 
cf., \cite[5.3]{Mat}, \cite[\S 7]{LM}} on 
$H^{2}_{\text{\'et}}(\X^{o}_{\overline{K(\eta)}}, \mathbb{Q}_{l})$ 
is unipotent (\cite{RZ}) and of finite index hence 
trivial action on it. 
Then, 
Brieskorn simultaneous resolution due to 
\cite{Brieskorn, Slodowy} 
and also \cite[Corollary 2.13]{SB.simul}, \cite[4.7]{SB.simul0} 
shows the existence of 
simultaneous resolution of $y$ without base change, 
at the category of algebraic spaces. 
Indeed, note that the assumption made at the first paragraph of 
\cite[\S 2]{SB.simul} automatically holds since we work over 
an algebraically closed field. 
We do the same for all 
fiberwise rational double points $y$. 
If $k=\C$, we could replace the use of 
above \cite{SB.simul} by more classical 
\cite{Landman}. 
\end{proof}
Combining Proposition \ref{Step1} and \ref{Step2}, we 
conclude the proof of Theorem \ref{KPP.geom}. 
\end{proof}

\subsection{For generalizations}\label{generalbase}

We discuss towards generalization of the above approach. 
Firstly, it is natural to expect to recover the analytic version 
Theorem \ref{KPP.original} 
fully from the MMP method. 

\begin{Rem}[Complex analytic version]
For more complex analytic statement Theorem \ref{KPP.original}, 
the above arguments with following slight verbatim modifications 
give an alternative partial proof when we further assume that 
$\X\to C$ is proper and projective away from 
singular fibers. 
\begin{enumerate}
\item \label{relMMP} For relative MMP, use \cite{KNX} (cf., also 
\cite{HP}) instead of the version with algebraic base 
\cite{Fjn.ss, HX, HW}, 
\item  use original 
\cite{Landman, Brieskorn, 
Slodowy} rather than \cite{SB.simul0, SB.simul} 
\item prove the analytic version of 
the Proposition \ref{Step1} by replacing our use of formal blow up 
by direct small analytic 
blow up  
\end{enumerate}
To recover Theorem \ref{KPP.original} fully by this method, 
we (only) need  relative/semistable extension of 
K\"ahler (absolute) MMP after \cite{HP}. 

On the other hand, also recall that Theorem \ref{KPP.original} 
can not be generalized to the case when the central fiber is allowed to contain non-K\"ahler components $V$, 
as the counterexamples found by Nishiguchi \cite[Theorem 4.4, \S 5 (compare with \S2)]{Nishi} show. 
In his counterexamples, $V$ are certain VII surfaces which he calls CB 
surfaces. 
\end{Rem}

%%%%%%%%%%%%%%%%%%%

We also review the following result 
due to \cite{Mat, LM} for convenience. 
That is, over more general Dedekind schemes with 
arbitrary (possibly mixed) characteristics and 
non-closed residue fields, 
at least we know the following version 
which allow finite base change. 
The reason we only discuss such weak version 
comes from the 
additional implicit assumptions made in the proofs of 
\cite[Corollary 2.13]{SB.simul}, \cite[4.7]{SB.simul0}. 
Below, we do not use \cite{SB.simul0, SB.simul}. 

\begin{Prop}[{\cite[3.1]{Mat}, \cite[3.1]{LM}}]
\label{mix}

Take 
any projective scheme 
over an excellent Dedekind scheme $C$, denoted as 
$\pi \colon \X\to C$, 
whose generic fiber has a trivial canonical divisor. 
We assume it has a strictly semistable model. 

Then, possibly after a finite base change of $C$, 
is a birational map from an 
algebraic space 
$\tilde{\X}\dashrightarrow \X$ such that 
\begin{enumerate}
\item $\tilde{\X}$ is 
strictly semistable over $C$ 
%\footnote{this means the fibers are all 
%{\it geometrically reduced} simple normal crossing divisors}
\item 
$K_{\tilde{\X}/C}\sim_{C} 0$. 
\end{enumerate}

\end{Prop}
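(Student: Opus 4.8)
The plan is to adapt the argument of Theorem \ref{KPP.geom}, equivalently to follow the proofs of \cite[3.1]{Mat} and \cite[3.1]{LM}, over the excellent Dedekind base $C$, isolating the two places where the residue fields being non-closed (or the mixed characteristic) forces the finite base change. First I would replace $\X$ by the assumed strictly semistable model and, if necessary, by a further projective log resolution over $C$, so that $\X$ becomes regular, projective over $C$, with reduced strictly simple normal crossing fibers. Then I would run the relative threefold minimal model program over $C$, which is available in the equicharacteristic settings by \cite{Fjn.ss, HX, HW} and in mixed characteristic by the inputs used in \cite{Mat, LM}, to produce a dlt relative minimal model $\pi_{\min}\colon \X_{\min}\to C$ with $K_{\X_{\min}/C}\sim_{C,\Q}0$.

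Next I would repeat the singularity analysis of Proposition \ref{Step1}. Each completed local ring of $\X_{\min}$, which is terminal away from the fibers and, by the semicontinuity argument there, Gorenstein, is, up to the obvious change of the normal forms in the mixed-characteristic case, either regular with strictly simple normal crossing fiber, a rational double point in the fiber, or the non-$\Q$-factorial type cut out by $xy+u\,g(z,u)$ with $g$ a non-unit. The classification inputs (Du Val general hyperplane sections \`a la Reid--Mori, and \cite{ST} in positive and mixed characteristic) are statements about excellent local rings, so they survive passage to strict henselizations and completions. I would then resolve the third type by the iterated formal blow-up along $(x,g_{i})$ exactly as in Proposition \ref{Step1}, and algebraize the outcome using Artin's theorem \cite[Theorem 3.2, \S5]{Art.modif} (which permits an excellent Dedekind scheme as base), obtaining a relatively proper small crepant modification $\Y\to\X_{\min}$ from an algebraic space whose closed points are all regular with normal crossing fibers or rational double points in the fibers; being small and crepant, it satisfies $K_{\Y/C}\sim_{C}0$, and $\Y$ still admits the strictly semistable birational model $\X$.

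Finally I would carry out the simultaneous resolution of the finitely many fiberwise rational double points, the step that genuinely departs from the algebraically closed situation of Lemma \ref{Step2}. For such a point $y$ over a closed point of $C$ with residue field $K$, the action of ${\rm Gal}(\overline{K}^{\mathrm{sep}}/K)$ on the relevant \'etale cohomology of the nearby strictly semistable fiber is unipotent by \cite{RZ} and of finite order; over an algebraically closed residue field this action would already be trivial, but in general only a finite extension of $K$ trivializes it. After a single finite base change of $C$ that trivializes it at all of the rational double points at once, I would apply Brieskorn--Slodowy simultaneous resolution in a form valid over a henselian (or Dedekind) base \cite{Brieskorn, Slodowy, Landman}, deliberately not using \cite{SB.simul0, SB.simul} whose proofs carry implicit hypotheses, to obtain a small simultaneous resolution $\tilde{\X}\to\Y$ in the category of algebraic spaces. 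Then $\tilde{\X}$ is strictly semistable over the base-changed $C$ and, being crepant over $\Y$, has $K_{\tilde{\X}/C}\sim_{C}0$, as required.

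The hard part will be this last step. Over a non-closed residue field the monodromy on the vanishing cycles is only unipotent of finite order, not trivial, so the finite base change is genuinely unavoidable; one must check that a single base change handles every singular point simultaneously and, more delicately, that the version of Brieskorn--Slodowy simultaneous resolution one invokes is honestly available over the given excellent Dedekind scheme in mixed characteristic, and it is precisely this subtlety that confines the statement to the weaker, base-change-allowing form.
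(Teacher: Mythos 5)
Your overall architecture (semistable MMP over $C$, the local analysis of Proposition \ref{Step1}, formal small blow-ups algebraized by Artin, then simultaneous resolution of the fiberwise rational double points) is the same as the paper's, which treats this Proposition precisely as Theorem \ref{KPP.geom} with three substitutions. However, two of your key steps do not go through as written over an excellent Dedekind scheme of mixed characteristic.

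First, you pass from the given data to a \emph{projective} strictly semistable model by ``a further projective log resolution over $C$.'' That is exactly the step the paper flags as the one \cite{LM} forgets: in mixed characteristic you cannot invoke a projective log resolution; one must instead perform a toroidal resolution in the sense of \cite[2.9.2]{Saito}, corresponding to a regular subdivision of the associated cone complex, and achieving a \emph{regular} subdivision compatible with the integral structure is itself a source of the a priori nontrivial finite base change. Your proposal locates the base change only in the monodromy/simultaneous-resolution step, so you are missing one of the two places it genuinely enters. Second, for the simultaneous resolution you correctly decline to use \cite{SB.simul0, SB.simul}, but you then invoke ``Brieskorn--Slodowy \ldots in a form valid over a henselian (or Dedekind) base'' citing \cite{Brieskorn, Slodowy, Landman}. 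Those results are complex-analytic/characteristic-zero and do not apply over a mixed-characteristic Dedekind base; the correct replacement is Artin's algebraic construction of Brieskorn's resolution \cite{Art.sim}, which works in the category of algebraic spaces over such bases after a finite base change (the Weyl cover) --- this is the other, and in the paper the stated, source of the base change in the simultaneous-resolution step. Your Galois-theoretic heuristic for why a base change is unavoidable there is in the right spirit, but without \cite{Art.sim} the step you yourself call ``the hard part'' is not actually carried out. Finally, the MMP input in mixed characteristic should be named concretely (\cite{kwmt, TY, Bhattetc}) rather than deferred to ``the inputs used in \cite{Mat, LM}.''
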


\begin{proof}
This is essentially proven in the arguments during the 
proofs of 
\cite[3.1]{Mat} and \cite[3.1]{LM}, although the latter forgets 
the following process \eqref{toroidal.resol}. We just review that 
the differences with the above 
proof of Theorem \ref{KPP.geom} are: 
\begin{enumerate}
\item \label{toroidal.resol}
do natural toroidal resolution (cf., e.g., \cite[2.9.2]{Saito}) 
after a priori nontrivial 
base change, corresponding to a regular subdivision of a cone complex 
\item replace 
the partial 
use of \cite{SB.simul0, SB.simul} by more classical \cite{Art.sim} 
for simultaneous resolution (in the proof of Proposition \ref{Step2}), 
\item replace the use of \cite{Fjn.ss, HX, HW} by \cite{kwmt, TY, Bhattetc}). 
\end{enumerate}
\end{proof}

Again, it seems reasonable to expect refinement of above at the level 
of Theorem \ref{KPP.geom} may be possible by similar method (see 
\cite{SB.simul} again) 
but we do 
not pursue further in this notes. 

\begin{ack}
We would like to thank 
C.Birkar, O.Fujino, T.Koshikawa, Y.Matsumoto, S.Mori, 
T.Takamatsu, 
H.Tanaka, C.Xu and S.Yoshikawa for helpful comments. 

\end{ack}

%%%%%%%
%%%%%%%%
%%%%%%%
%%%%%%%

\bigskip

\footnotesize 
\noindent
Email address: \footnotesize 
{\tt yodaka@math.kyoto-u.ac.jp} \\
Affiliation: Department of Mathematics, Kyoto university, Japan  \\

\end{document}